\definecolor{darkgreen}{rgb}{0,0.6,0.1}
\definecolor{darkblue}{rgb}{0,0,0.7}
\definecolor{mygreen}{rgb}{0.3,0.8,0}
\definecolor{myblue}{rgb}{0,0.5,1}
\definecolor{myred}{rgb}{1,0,0}
\DeclareMathOperator*{\argmin}{\arg\!\min}
\newcommand{\R}        {\mathbb{R}}              % euclidean space
\newcommand{\N}        {\mathbb{N}}              % Naturals
\newcommand{\M}        {\mathcal{M}}              % Manifold
\newcommand{\C}       {\mathcal{C}}              % Continuity
\renewcommand{\d}         {\mathrm{d}}              % d straight
\newcommand{\bc}[1]     {{[#1]}}
\newcommand{\defTerm}[1]    {\emph{#1}}                             % definition of terms
\newcommand{\tT}{\mathrm{T}}
\DeclareDocumentCommand\Exp{ m o o}{%
\def\@tempa{#3}
\operatorname{exp}_{#1}%
\IfNoValueF{#2}{\!%
\ifx\@tempa\@empty\relax \else
\IfNoValueTF{#3}{\left(}{\mathopen#3(} \fi%
#2%
\ifx\@tempa\@empty\relax \else
\IfNoValueTF{#3}{\right)}{\mathclose#3)} \fi%
}%
}%
\DeclareDocumentCommand\Log{ m o o}{%
  \mathrm{log}_{#1}%
\IfNoValueF{#2}{\!%
\ifx\@tempa\@empty\relax \else
\IfNoValueTF{#3}{\left(}{\mathopen#3(} \fi%
#2%
\ifx\@tempa\@empty\relax \else
\IfNoValueTF{#3}{\right)}{\mathclose#3)} \fi%
}%
}%
\newcommand{\bspline}    {\mathbf{B}}              % B\'ezier spline
\newcommand{\vect}[1]{\mathbf{#1}}
\newcommand{\ie}{{\emph{i.e.}}}
\newcommand{\eg}{{\emph{e.g.}}}
\newcommand{\etal}{{\emph{et al.}}}
  \tikzstyle{link}=[->,>=stealth,thick]
  \tikzset{
    item/.style={rectangle,draw,thick},
    link0/.style={ dashed, line width=1pt },
    link1/.style={ line width=2pt },
    boxed/.style={rectangle,draw},
    jacobi/.style={<-,line width = 1pt},
    revjacobi/.style={<-,line width = 1pt,dashed},
    }
\theoremstyle{plain}
\newtheorem{thm}{Theorem}       % theorems
\newtheorem{defn}[thm]{Definition}   % Definitions
\newtheorem{lem}[thm]{Lemma}     % Lemma
\newtheorem{exmp}[thm]{Example}   % Examples (italic)
\theoremstyle{remark}
\newtheorem*{remark}{Remark}      % remarks
\title{
  A variational model for data fitting on manifolds\\ by minimizing the acceleration of a B\'ezier curve
}
\author{
  Ronny~Bergmann\footnotemark[1]
  \and
  Pierre-Yves~Gousenbourger\footnotemark[2]
}
\date{July 26, 2018}
\begin{document}

% Title
\maketitle
\renewcommand{\thefootnote}{\fnsymbol{footnote}}
\footnotetext[1]{Research Group Numerical Mathematics
  (Partial Differential Equations),
    Faculty of Mathematics,\\
    Technische Universität Chemnitz,
    D-09107 Chemnitz, Germany \\
    E-mail: \href{mailto:ronny.bergmann@mathematik.tu-chemnitz.de}{\texttt{ronny.bergmann@mathematik.tu-chemnitz.de}}}
\footnotetext[2]{ICTEAM Institute,\\
  Université catholique de Louvain,
  B-1348 Louvain-la-Neuve, Belgium \\
    E-mail: \href{mailto:pierre-yves.gousenbourger@uclouvain.be}{\texttt{pierre-yves.gousenbourger@uclouvain.be}}}
\renewcommand{\thefootnote}{\arabic{footnote}}

% Abstract
\begin{abstract}
  \noindent
  We derive a variational model to fit a composite Bézier curve to a set of
  data points on a Riemannian manifold. The resulting curve 
  is obtained in such a way that its mean squared acceleration is minimal 
  in addition to remaining close the data points.
  We approximate the acceleration 
  by discretizing the squared second order derivative along the curve. 
  We derive a closed-form, numerically stable and efficient
  algorithm to compute the gradient of a Bézier curve on manifolds
  with respect to its control points, expressed as a concatenation of so-called
  adjoint Jacobi fields.
  Several examples illustrate the capabilites and validity of this
  approach both for interpolation and approximation. The examples also
  illustrate that the approach outperforms previous works tackling this problem.
\end{abstract}
\paragraph{AMS subject classification (2010).} 65K10, 65D10, 65D25, 53C22, 49Q99
\paragraph{Key words.} Riemannian manifolds, curve fitting, composite B\'ezier curves, Jacobi fields, variational models.
%
% --- Introduction
\section{Introduction}
  % The problem
  This papers addresses the problem of fitting a smooth curve to
  data points $d_0,\dots,d_n$ lying on a Riemannian manifold $\M$ and
  associated with real-valued parameters $t_0,\dots,t_n$. The curve
  strikes a balance between a data proximity constraint and a smoothing
  regularization constraint.

  % The motivation
  Several applications motivate this problem in engineering and the sciences.
  For instance, curve fitting is of high interest in
  projection-based model order reduction of one-dimensional dynamical
  systems~\cite{Pyta2016}. In that application, the dynamical
  system depends on the Reynolds number and the model reduction is
  obtained by computing suitable projectors as points on a Grassmann
  manifold. Finding a projector is however a time- and memory-consuming
  task. Based on projectors precomputed for given parameter values,
  fitting is used to approximate the projector associated with a new
  parameter value.  In~\cite{Gousenbourger2017}, the same strategy is
  used to approximate wind field orientations represented as points on
  the manifold of positive semidefinite covariance matrices of size $p$ and
  rank $r$. Further applications are involving rigid-body motions on
  $\mathrm{SE}(3)$, like Cosserat rods applications~\cite{Sander2010}, 
  or orientation tracking~\cite{Park2010}. Sphere-valued data are also of
  interest in many applications of data analysis, for storm tracking and
  prediction, or the study of bird migration~\cite{Su2014}.

  % Literature
  There exists different approaches to tackle the curve fitting problem.
  Among others, we name here the subdivision schemes
  approach~\cite{Dyn2009,Wallner2007} or the Lie-algebraic
  methods~\cite{Shingel2008}.
  However, the most popular approach nowadays is probably to encapsulate
  the two above-mentioned constraints into an optimization problem
  \begin{equation}
  \label{eq:E}
  \min_{\gamma \in \Gamma} E_\lambda(\gamma) \coloneqq
    \int_{t_0}^{t_n} \Bigl\lVert
      \frac{\mathrm D^2 \gamma(t)}{\d t^2}
    \Bigr\rVert^2_{\gamma(t)}
    \d t
    +
    \frac{\lambda}{2} \sum_{i=0}^n \d^2\big(\gamma(t_i),d_i)\big),
  \end{equation}
  where $\Gamma$ is an admissible space of curves $\gamma\colon
  [t_0,t_n] \to \M$, $t \mapsto \gamma(t)$,
  $\frac{\mathrm D^2}{\d t^2}$ denotes the (Levi-Civita) second
  covariant derivative,
  $\lVert\cdot \rVert_{\gamma(t)}$ denotes the Riemannian metric at $\gamma(t)$
  from which we can define a Riemannian distance $\d(\cdot,\cdot)$.
  Finally, $\lambda \in \R$ is a parameter that strikes the balance between the
  \emph{regularizer}
  $\displaystyle\int_{t_0}^{t_n} \Bigl\lVert\frac{\mathrm D^2
  \gamma(t)}{\d t^2} \Bigr\rVert^2_{\gamma(t)} \d t$ and the
  \emph{fitting} term $\displaystyle\sum_{i=0}^n \d^2(\gamma(t_i),d_i))$.
  This approach leads to the remarkable property that, when the
  manifold $\M$ reduces to the Euclidean space and $\Gamma$ is the
  Sobolev space $H^2(t_0,t_n)$, the solution to~\eqref{eq:E} is the natural
  cubic spline~\cite{Green1993}.
  
  Optimization on manifolds has gained a lot of interest this last decade, 
  starting with the textbook~\cite{Absil2008} that summarizes several optimization
  methods on matrix manifolds. Recently, toolboxes have emerged, providing 
  easy access to such optimization methods, \eg, Manopt~\cite{Manopt} and
  MVIRT~\cite{Bergmann2017}.
  The former received a very positive return in many different topics of
  research, with applications for example in low-rank
  modelling in image analysis~\cite{Zhou2015},
  dimensionality reduction~\cite{Cunningham2015}, phase retrieval~\cite{Sun2017}
  or even 5G-like MIMO systems~\cite{Yu2016}.
  The latter stems from recent interest in manifold-valued image and data
  processing, phrased as variational models on the product manifold \(\M^N\),
  where \(N\) is the number of pixels. Starting with total variation (TV)
  regularization of phase-valued data~\cite{Strekalovskiy2011,Strekalovskiy2013},
  different methods for TV
  on manifolds~\cite{Lellmann2013,Weinmann2014} have been developed as well as
  second order methods~\cite{Bergmann2014,Bacak2016} up to infimal
  convolution~\cite{Bermann2017a} and total generalized
  variation~\cite{Bermann2017a,Bredies2018}.
  Furthermore, different algorithms have been generalized to manifold-valued
  data, besides the previous works using gradient descent or cyclic proximal point
  methods, a Douglas–Rachford splitting~\cite{Bergmann2016a}, iteratively reweighted
  least squares~\cite{Grohs2014} and more general half-quadratic
  minimization~\cite{Bergmann2016} have been introduced.

  The curve fitting problem~\eqref{eq:E}, has been tackled differently the past few
  years. Samir \etal~\cite{Samir2012} considered the case where $\Gamma$ is an
  infinite dimensional Sobolev space of curves, and used the Palais-metric to
  design a gradient descent algorithm for~\eqref{eq:E} (see, \eg, \cite{Su2012} 
  for an application of this approach).
  Another method consists in discretizing the curve $\gamma$ in $N$ points,
  and therefore consider $\Gamma = \M^N$ (see, \eg, \cite{Boumal2011} for
  a result on $\mathrm{SO}(3)$).
  Finally, the limit case where $\lambda \to 0$ is already well studied and
  known as the geodesic regression~\cite{Kim2018,Fletcher2013,Rentmeesters2011}.

  % The Bezier approach
  A recent topic concerns curve fitting by means of Bézier curves.
  In that approach, the search space $\Gamma$ is reduced to as set of
  composite Bézier curves. Those are a very versatile tool to model
  smooth curves and surfaces for real- and vector-valued discrete data
  points (see~\cite{Farin2002} for a comprehensive textbook), but they
  can also be used to model smooth curves and surfaces for manifold-valued
  data~\cite{Absil2016,Popiel2007}. The advantage to work with such objects,
  compared to classical approaches, are that (i) the search space is
  drastically reduced to the so-called control points of the Bézier curves
  (and this leads to better time and memory performances) and (ii) it is
  very simple to impose differentiability for the optimal curve, which is
  appreciated in several of the above-mentioned applications.
  However, while obtaining such an optimal curve reduces directly to
  solving a linear system of equations for data given on a Euclidean space,
  there is up to now no known closed form of the optimal Bézier curve for
  manifold valued data.

  % Our approach
  In this work, we derive a gradient descent algorithm to compute a
  differentiable composite Bézier curve $\bspline\colon[t_0,t_n]\to\M$ that 
  satisfies~\eqref{eq:E}, \ie, such that $\bspline(t)$ has a minimal 
  mean squared acceleration, and fits the set of $n+1$ manifold-valued 
  data points at their associated time-parameters. 
  We consider the manifold-valued generalization of Bézier curves~\cite{Popiel2007}
  in the same setting as in Arnould~\etal~\cite{Arnould2015}, 
  or more recently in~\cite{Gousenbourger2018}.
  We employ the (squared) second order absolute differences introduced
  in~\cite{Bacak2016} to obtain a discrete approximation of the regularizer
  from~\eqref{eq:E}. The quality of the approximation depends only on the number
  of sampling points. We exploit the recursive structure of the De Casteljau 
  algorithm~\cite{Popiel2007} to derive the gradient of the objective function 
  with respect to the control points of~$\bspline(t)$. The gradient is built
  as a recursion of Jacobi fields that, for numerical reasons, are implemented
  as a concatenation of so-called \defTerm{adjoint Jacobi fields}. Furthermore,
  the corresponding variational model only depends on the number of control 
  points of the composite Bézier curve, and not on the number of sampling points. 
  We finally obtain an approximating model to~\eqref{eq:E} that we
  solve with an algorithm only based on three tools on the manifold:
  the exponential map, the logarithmic map, and a certain Jacobi field along geodesics.
  
  % Outline
  The paper is organized as follows. We introduce the necessary
  preliminaries ---Bézier curves, Riemannian manifolds and Riemannian second
  order finite differences--- in Section~\ref{sec:Preliminaries}.
  In Section~\ref{sec:Gradient} we derive the 
  gradient of the discretized mean squared acceleration of the composite
  Bézier curve with respect to its control points,
  and thus of the regularizer of~\eqref{eq:E}.
  In Section~\ref{sec:Numerics}, we present the corresponding gradient descent 
  algorithm, as well as an efficient gradient evaluation method, to
  solve~\eqref{eq:E} for different values of $\lambda$. The limit case
  where~$\lambda \to \infty$ is studied as well.
  Finally, in Section~\ref{sec:Examples}, we validate, analyze and illustrate
  the performance of the algorithm for several numerical examples on the
  sphere~$\mathbb{S}^2$ and on the special orthogonal group $\mathrm{SO}(3)$.
  We also compare our solution to existing B\'ezier fitting methods.
  A conclusion is given in Section~\ref{sec:Concl}.

%
% --- Preliminaries
\section{Preliminaries}\label{sec:Preliminaries}
  %
  % Bezier Curves
  \subsection{Bézier functions and composite Bézier spline}
  \label{subsec:BezierRn}
    Consider the Euclidean space $\R^m$. A \defTerm{Bézier curve of degree
    $K\in\N$} is a function $\beta_K\colon[0,1]\to\R^m$ parametrized by
    \defTerm{control points} $b_0,\ldots,b_K \in \R^m$ and taking the form
    \begin{equation*}
      \label{eq:bezier_eucl}
      \beta_K(t ; b_0, \ldots, b_K)
      \coloneqq
      \sum_{j=0}^K b_j B_{j,K}(t),
    \end{equation*}
    where \(B_{j,K}(t) = \binom{K}{j}t^j (1-t)^{K-j}\) are the \defTerm{Bernstein basis
    polynomials}~\cite{Farin2002}.
    For example the linear Bézier curve~\(\beta_1\) is just the line segment
    \((1-t)b_0 + tb_1\) connecting the two control points \(b_0\) and \(b_1\). 
    The explicit formulae of the quadratic and
    cubic Bézier curves read
    \begin{align*}
      \beta_2(t ; b_0, b_1, b_2) &= b_0(1-t)^2 + 2b_1(1-t)t + b_2 t^2,\\
      \beta_3(t ; b_0, b_1, b_2, b_3)
        &= b_0(1-t)^3 + 3 b_1(1-t)^2t + 3 b_2(1-t)t^2 + b_3 t^3,
    \end{align*}
    for given control points \(b_0,b_1,b_2\in\mathbb R^m\) and
    an additional point~\(b_3\in\mathbb R^m\) for the cubic case.

    A \defTerm{composite Bézier curve} is a function
    \(\bspline\colon[0,n]\to\R^m\) composed of \(n\) Bézier curves and defined
    as
    \begin{equation}\label{eq:RnCompBezier}
      \bspline(t) \coloneqq
        \begin{cases}
          \beta_{K_0}(t; b_0^0,\ldots,b_{K_0}^0)&\mbox{ if } t\in [0,1]\\
          \beta_{K_i}(t-i; b_0^i,\ldots,b_{K_i}^i)&\mbox{ if }
            t\in (i,i+1], \quad i\in\{1,\ldots,n-1\},
        \end{cases}
    \end{equation}
    where \(K_i\in\N\) denotes the degree of the \(i^\text{th}\) 
    Bézier curve~\(\beta_{K_i}\)~of $\bspline$ and~\(b_j^i\), \(j=0,\dots,K_i\),
    are its control points. Furthermore, $\bspline(t)$ is continuous if the last
    and first control points of two consecutive segments
    coincide~\cite{Farin2002}. We introduce~\( p_i \) as the point at
    the junction of two consecutive Bézier segments, \ie, \(p_i\coloneqq
    b_{K_{i-1}}^{i-1} = b_0^{i}\). Differentiability is obtained if the control
    points of two consecutive segments are aligned.
    We introduce further~\( b_{i+1}^- \coloneqq b_{K_{i}-1}^i\) and~\(
    b_{i+1}^+ \coloneqq b_1^{i+1} \) such that the differentiability condition
    reads~\( p_{i+1} = \frac{K_{i} b_{i+1}^- + K_{i+1} b^{i+1}_+}{K_{i} +
    K_{i+1}} \).

    \begin{exmp}[\(C^1\) conditions for composite cubic Bézier curves]
      We consider the case where the Bézier segments are all cubic, as
      represented in Fig.~\ref{fig:bezier}. The composite Bézier
      curve \( \bspline(t) \) is \( C^1 \) if \( p_{i} = \frac{1}{2}(b_i^- +
      b_i^+)\), \( i = 1,\dots,4 \), with \(p_i = b_3^{i-1} = b_0^i\), \(
      b_i^- = b_2^{i-1} \), and \( b_i^+ = b_1^i \).
    \end{exmp}

    \begin{figure}[tbp]
      \centering
      \includegraphics{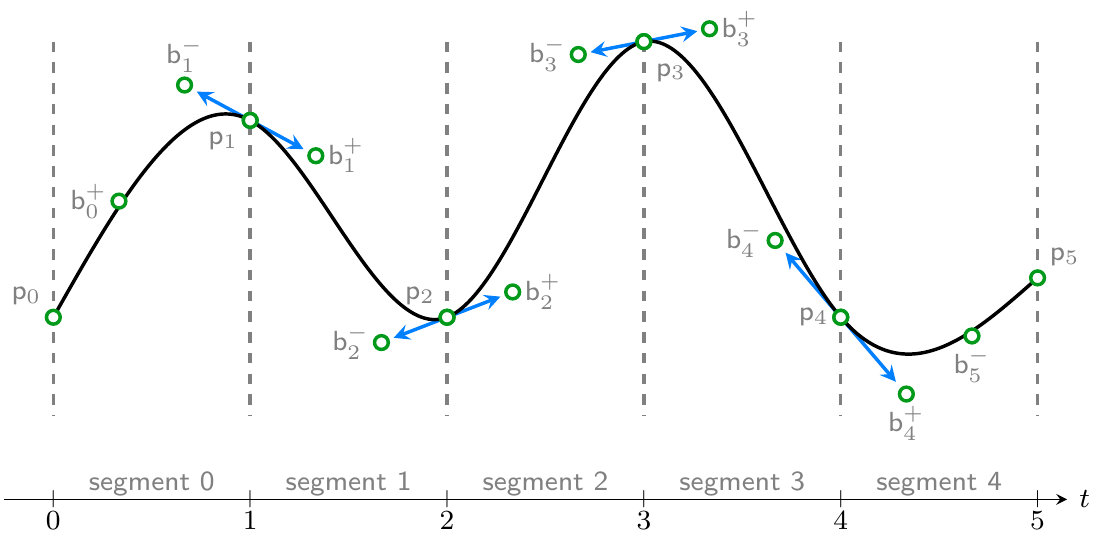}
      \caption{Schematic representation of the composite cubic Bézier
          curve \(\bspline: [0,5] \to \M\), for \(\M = \R\) (black), and
          its control points (green circles).
          Continuous differentiability is reached
          at the junction of the segments (the blue arrows
          draw the first derivative of \(\bspline\)).}
         \label{fig:bezier}
    \end{figure}
  %
  % Riemannian Manifolds
  \subsection{Riemannian Manifolds}
    We consider a complete \(m\)-dimensional Riemannian manifold $\M$. We refer
    to~\cite{ONeill1966,doCarmo92} for an introduction to Riemannian manifolds
    and to~\cite{Absil2008} for optimization thereon. We will use the following
    terminology and notations.

    We denote by $T_a\M$ the (Euclidean) tangent space to $\M$ at $a \in \M$;
    $T\M \coloneqq \cup_{a\in\M}T_a\M$ is the tangent bundle to $\M$; $\langle
    \cdot, \cdot \rangle_a$ denotes the inner product in the tangent space $T_a\M$ at
    $a$ and from which we deduce the norm of $v \in T_a\M$ denoted by $\lVert v
    \rVert_{a} = \sqrt{\langle v, v \rangle_a}$. For a (not necessarily unique)
    shortest geodesic between $a$ and $b\in \M$, we write $g(\cdot;a,b)\colon \R \to \M$,
    $t\mapsto g(t;a,b)$, parametrized such that \(g(0;a,b) = a\)
    and~\(g(1;a,b) = b\). This choice of parametrization also means
    that the covariant derivative $\frac{D}{dt}$ of \(g\) with respect to time
    satisfies~\(\lVert \tfrac{D}{dt}g(t;a,b)\rVert_{g(t)} = d_{\mathcal M}(a,b)\),
    for all $t\in[0,1]$, where $d_\M(a,b)$ is the geodesic distance between $a$ and $b \in \M$.
    The Riemannian exponential reads $\Exp{a}\colon T_a\M \to \M, v \mapsto b = \Exp{a}[v]$
    and we denote by \(r_a\in\mathbb R\) the maximal radius such
    that the exponential map is bijective on \(\mathcal D_a \coloneqq\{ b\in \M :
    d_{\mathcal M}(a,b) < r_a\}\). Then $\Log{a}\colon \mathcal D_a
    \to T_a\M, b \mapsto v = \Log{a}[b]$ is called the Riemannian logarithm
    which is (locally) the inverse of the exponential. A Riemannian manifold is
    called symmetric in \(x\in\mathcal M\) if the geodesic reflection \(s_x\) at
    \(x\in\mathcal M\) given by the mapping \(\gamma(t)\mapsto\gamma(-t)\)
    is an isometry at least locally near \(x\),
    for all geodesics through \(\gamma(0)=x\). If \(\mathcal M\) is symmetric
    in every \(x\in\mathcal M\), the
    manifold is called (Riemannian) symmetric space or symmetric manifold.

    In the following we assume, that both the exponential and the logarithmic
    map are available for the manifold and that they are computationally not too
    expensive to evaluate. Furthermore, we assume that the manifold is
    symmetric.
  %
  % Composite Bézier curves on manifolds
  \subsection{Composite Bézier curves on manifolds}
    One well-known way to generalize Bézier curves to a Riemannian manifold
    $\mathcal{M}$ is via the De Casteljau algorithm~\cite[Sect.~2]{Popiel2007}.
    This algorithm only
    requires the Riemannian exponential and logarithm and conserves the
    interpolation property of the first and last control points. Some examples
    on interpolation and fitting with Bézier curves or Bézier surfaces,
    \ie, generalizations of tensor product Bézier curves,
    can be found in Absil~\etal~\cite{Absil2016}.

    Consider $\beta_{K}\colon[0,1] \to \M,
    t \mapsto \beta_{K}(t;b_0,b_1,\dots,b_K)$,
    the manifold-valued Bézier curve of order
    $K$ driven by $K+1$ control points $b_0,\ldots,b_K\in\M$.
    We introduce the points
    $x_i^\bc{0} = b_i$ and iterate the construction of further points. 
    For \(i=0,\ldots,K-k\), \(k=1,\ldots,K\), we define
    \begin{equation}\label{eq:castel}
      x_i^\bc{k} \coloneqq \beta_k(t;b_i,\dots,b_{i+k})
      = g(t;x_i^\bc{k-1},x_{i+1}^\bc{k-1})
    \end{equation}
    as the $i^\text{th}$ point of the $k^\text{th}$ step of the De Casteljau
    algorithm, and obtain $\beta_K(t;b_0,\dots,b_K) = x_0^\bc K$.

    The De Casteljau algorithm is illustrated on Fig.~\ref{fig:casteljau} for
    a Euclidean cubic Bézier curve $\beta_3(t;b_0,b_1,b_2,b_3)$. The general
    cubic Bézier curve can be explicitly expressed on a manifold $\M$ as
    \begin{align*}
      \beta_3(t;x^\bc 0_0,x^\bc 0_1,x^\bc 0_2,x^\bc 0_3)
        &= g
        \Bigl(t;
          g\bigl(t;
            g(t;x^\bc 0_0,x^\bc 0_1)
            ,
            g(t;x^\bc 0_1,x^\bc 0_2)
          \bigr)
          , \\
          & \qquad\quad\
          g\bigl(t;
            g(t;x^\bc 0_1,x^\bc 0_2)
            ,
            g(t;x^\bc 0_2,x^\bc 0_3)
          \bigr)
        \Bigl)
        \\
        &= g\bigl(t;
          g(t;x^\bc 1_0,x^\bc 1_1),
          g(t;x^\bc 1_1,x^\bc 1_2)
        \bigr)
        \\&= g(t; x^\bc 2_0,x^\bc 2_1)
        \\&=x_0^\bc 3.
    \end{align*}
    \begin{figure}[tbp]
      \centering
        \includegraphics{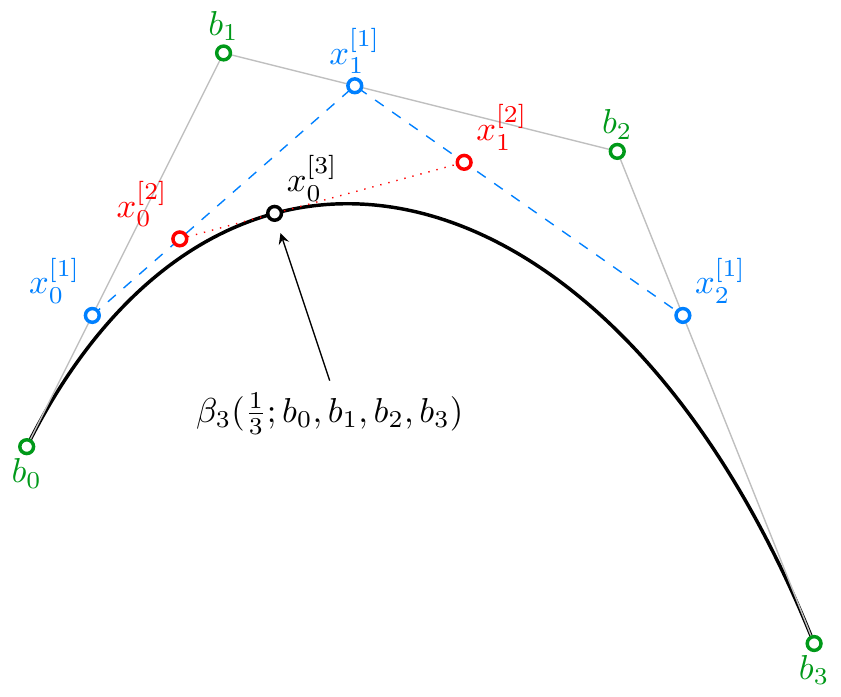}
      \caption{Construction of a cubic Bézier curve via the De Casteljau
      algorithm.}
      \label{fig:casteljau}
    \end{figure}

    The conditions of continuity and differentiability are generalized
    to manifolds in~\cite{Popiel2007}.
    \begin{lem}[Differentiability conditions~\cite{Popiel2007}]
      \label{lem:c0c1-manifold}
      Consider the composite Bézier
      \\curve~\(\bspline \colon [0,2] \to \mathcal M\)
      consisting of
      \(f \colon [0,1] \to \M, t \mapsto
          f(t) = \beta_K(t;b_0^0,b_1^0,\dots,b_K^0)\) and
      \(g \colon [1,2] \to \M, t \mapsto
          g(t) = \beta_{\bar K}(t-1;b_0^1,b_1^1,\dots,b_{\bar K}^1)\), i.e.
      \[
        \bspline(t) \coloneqq \begin{cases}
          f(t) & \text{ for } t \in [0,1], \\
          g(t) & \text{ for } t \in (1,2].
        \end{cases}
      \]
      The composite Bézier curve~$\bspline(t)$ is continuous and
      continuously differentiable if the two following conditions hold:
      \begin{align}\label{eq:c1cond}
        b_K^0 = b_0^1
        \qquad\text{and}\qquad
        b_K^0 &= g(s; b_{K-1}^0, b_1^1),
        \quad s = \frac{K}{K + \bar{K}}.
      \end{align}
    \end{lem}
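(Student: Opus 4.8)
The plan is to reduce the statement to two facts: that continuity of $\bspline$ forces the endpoint condition $b_K^0 = b_0^1$, and that $C^1$-differentiability at the junction $t=1$ amounts to matching the one-sided velocities of $f$ and $g$, which in turn — after computing those velocities via the De Casteljau recursion — is equivalent to the three points $b_{K-1}^0$, $p\coloneqq b_K^0 = b_0^1$, $b_1^1$ lying on a common geodesic with $p$ at the geodesic parameter $s = \frac{K}{K+\bar K}$, i.e. $p = g(s; b_{K-1}^0, b_1^1)$. Continuity is immediate: $\bspline$ is continuous on $[0,1)$ and $(1,2]$ since $f$ and $g$ are (finite compositions of geodesics $g(\cdot;\cdot,\cdot)$, hence smooth), and the left and right limits at $1$ are $f(1)=\beta_K(1;b_0^0,\dots,b_K^0)=b_K^0$ and $g(1)=\beta_{\bar K}(0;b_0^1,\dots,b_{\bar K}^1)=b_0^1$, using the interpolation property of De Casteljau at the endpoints (which follows inductively from $x_i^\bc{k} = g(t;x_i^\bc{k-1},x_{i+1}^\bc{k-1})$ evaluated at $t=0$ and $t=1$). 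So continuity $\iff b_K^0 = b_0^1$.

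For differentiability, I would first record the one-sided derivatives. The key computational ingredient is the derivative of a Bézier curve at its endpoints: I claim $\frac{\d}{\d t}\beta_K(t;b_0,\dots,b_K)\big|_{t=0} = K\,\Log{b_0}[b_1]$ and $\frac{\d}{\d t}\beta_K(t;b_0,\dots,b_K)\big|_{t=1} = -K\,\Log{b_K}[b_{K-1}]$. This is proved by induction on $K$ using the recursion~\eqref{eq:castel}: differentiating $x_0^\bc{K}(t) = g(t; x_0^\bc{K-1}(t), x_1^\bc{K-1}(t))$ via the chain rule, and observing that at $t=0$ all intermediate points $x_i^\bc{k}$ collapse onto $b_i$-type endpoints while the ``geodesic-in-the-first-argument'' and ``geodesic-in-the-second-argument'' contributions combine; at $t=0$ the Jacobi-field factor coming from the $g(t;\cdot,\cdot)$ dependence on its base point degenerates to the identity, and one is left with $\frac{D}{dt}g(0;b_0,b_1) = \Log{b_0}[b_1]$ times the count of paths, which yields the factor $K$. (The Euclidean formulas $\beta_K'(0) = K(b_1-b_0)$, $\beta_K'(1) = K(b_K - b_{K-1})$ are the flat-space shadow of this.) Hence $f'(1^-) = -K\,\Log{b_K^0}[b_{K-1}^0]$ and $g'(1^+) = \bar K\,\Log{b_0^1}[b_1^1]$, where the derivative of $g$ at its left endpoint uses the shift $t\mapsto t-1$.

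Now, assuming $b_K^0 = b_0^1 =: p$, the curve $\bspline$ is $C^1$ at $t=1$ iff $f'(1^-) = g'(1^+)$, i.e. $-K\,\Log{p}[b_{K-1}^0] = \bar K\,\Log{p}[b_1^1]$, equivalently $\Log{p}[b_{K-1}^0] = -\tfrac{\bar K}{K}\Log{p}[b_1^1]$. This says exactly that the three points are geodesically collinear with $p$ dividing them in the ratio $K : \bar K$: the geodesic through $b_{K-1}^0$ and $b_1^1$ passes through $p$, and writing $c(\sigma) = g(\sigma; b_{K-1}^0, b_1^1)$ one has $c(0) = b_{K-1}^0$, $c(1) = b_1^1$, and the collinearity relation forces $c(s) = p$ with $s = \frac{K}{K+\bar K}$ — this is verified by checking that $\Log{c(s)}$ of the two endpoints are antiparallel with the correct length ratio, i.e. $c$ restricted to $[0,1]$ is the minimizing geodesic and $s$ is pinned by $\frac{\Log{p}[b_{K-1}^0]}{\Log{p}[b_1^1]}$-ratio $= -\frac{s}{1-s}$. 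This gives precisely~\eqref{eq:c1cond}.

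The main obstacle is the endpoint-derivative computation for the manifold-valued Bézier curve, i.e. establishing $\beta_K'(0) = K\,\Log{b_0}[b_1]$ (and the symmetric statement at $t=1$) rigorously. The subtlety is that $g(t;x,y)$ depends on $t$ both through the geodesic parameter and — after the first De Casteljau step — through its \emph{endpoints} $x=x_i^\bc{k-1}(t)$, $y = x_{i+1}^\bc{k-1}(t)$, so the chain rule produces Jacobi-field terms $\frac{\partial}{\partial x} g(t;x,y)$ and $\frac{\partial}{\partial y} g(t;x,y)$ acting on the inner derivatives. One must check that these Jacobi-field operators reduce to (scalar multiples of) the identity at the degenerate configuration $t=0$ (all inner points equal), so that the induction hypothesis propagates cleanly and the binomial bookkeeping collapses to the single factor $K$. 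This is where the assumption that $g$ is parametrized with $\lVert\frac{D}{dt}g(t;a,b)\rVert = d_\M(a,b)$ is used, and where one leans on standard properties of Jacobi fields along geodesics; alternatively one can cite Popiel–Noakes~\cite{Popiel2007} for exactly these velocity formulas and thereby shortcut this step entirely.
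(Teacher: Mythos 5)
Your overall route---continuity from the endpoint-interpolation property of the De Casteljau recursion, and $C^1$ from matching the one-sided velocities $f'(1^-)=-K\log_{b_K^0}b_{K-1}^0$ and $g'(1^+)=\bar K\log_{b_0^1}b_1^1$, with those endpoint-velocity formulas obtained by induction through \eqref{eq:castel} (the Jacobi-field term for the variation of the far endpoint vanishes at the endpoint, the one for the near endpoint reduces to the identity)---is sound, and it is in substance the Popiel--Noakes argument that the paper itself does not reproduce but simply cites. The admitted hand-waving in the induction is acceptable, since you may legitimately quote \cite{Popiel2007} for the velocity formulas.

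The genuine problem is the very last step, where you pass from velocity matching to the geodesic condition. From your own (correct) relation $\log_p b_{K-1}^0=-\frac{\bar K}{K}\log_p b_1^1$ the distances satisfy $d_\M(p,b_{K-1}^0):d_\M(p,b_1^1)=\bar K:K$, so $p$ lies on $c(\sigma)=g(\sigma;b_{K-1}^0,b_1^1)$ at $\sigma=\frac{d_\M(b_{K-1}^0,p)}{d_\M(b_{K-1}^0,p)+d_\M(p,b_1^1)}=\frac{\bar K}{K+\bar K}$, not at $\frac{K}{K+\bar K}$; your own criterion $\frac{s}{1-s}=\frac{\bar K}{K}$ yields the same value. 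You assert $s=\frac{K}{K+\bar K}$ without carrying out this check, and carrying it out contradicts the assertion. The Euclidean shadow makes the point plainly: $K(p-b_{K-1}^0)=\bar K(b_1^1-p)$ gives $p=\frac{K b_{K-1}^0+\bar K b_1^1}{K+\bar K}=g\bigl(\frac{\bar K}{K+\bar K};b_{K-1}^0,b_1^1\bigr)$, which is exactly the paper's own $C^1$ formula $p_{i+1}=\frac{K_i b_{i+1}^-+K_{i+1}b_{i+1}^+}{K_i+K_{i+1}}$ from Section~\ref{subsec:BezierRn}, and it agrees with the parameter printed in Lemma~\ref{lem:c0c1-manifold} only when $K=\bar K$ (the cubic case used in all of the paper's examples, where $s=\frac12$). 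So your argument, done correctly, establishes the condition with $s=\frac{\bar K}{K+\bar K}$, equivalently $b_K^0=g\bigl(\frac{K}{K+\bar K};b_1^1,b_{K-1}^0\bigr)$; as written it does not prove the stated parameter for $K\neq\bar K$, and the final identification contradicts the displayed relation immediately preceding it. Redo the ratio bookkeeping and flag the resulting discrepancy with the printed statement explicitly rather than absorbing it.
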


    The composite Bézier curve \(\bspline\colon[0,n] \to \mathcal M\) is then
    defined completely analogously to the Euclidean case from
    Eq.~\eqref{eq:RnCompBezier}. The differentiability
    conditions~\eqref{eq:c1cond} of Lemma~\ref{lem:c0c1-manifold} have to be
    satisfied at each junction point~$p_{i}$,
    $i=1,\ldots,n-1$.

    \begin{figure}[tbp]\centering
        \includegraphics[width=0.42275\textwidth]{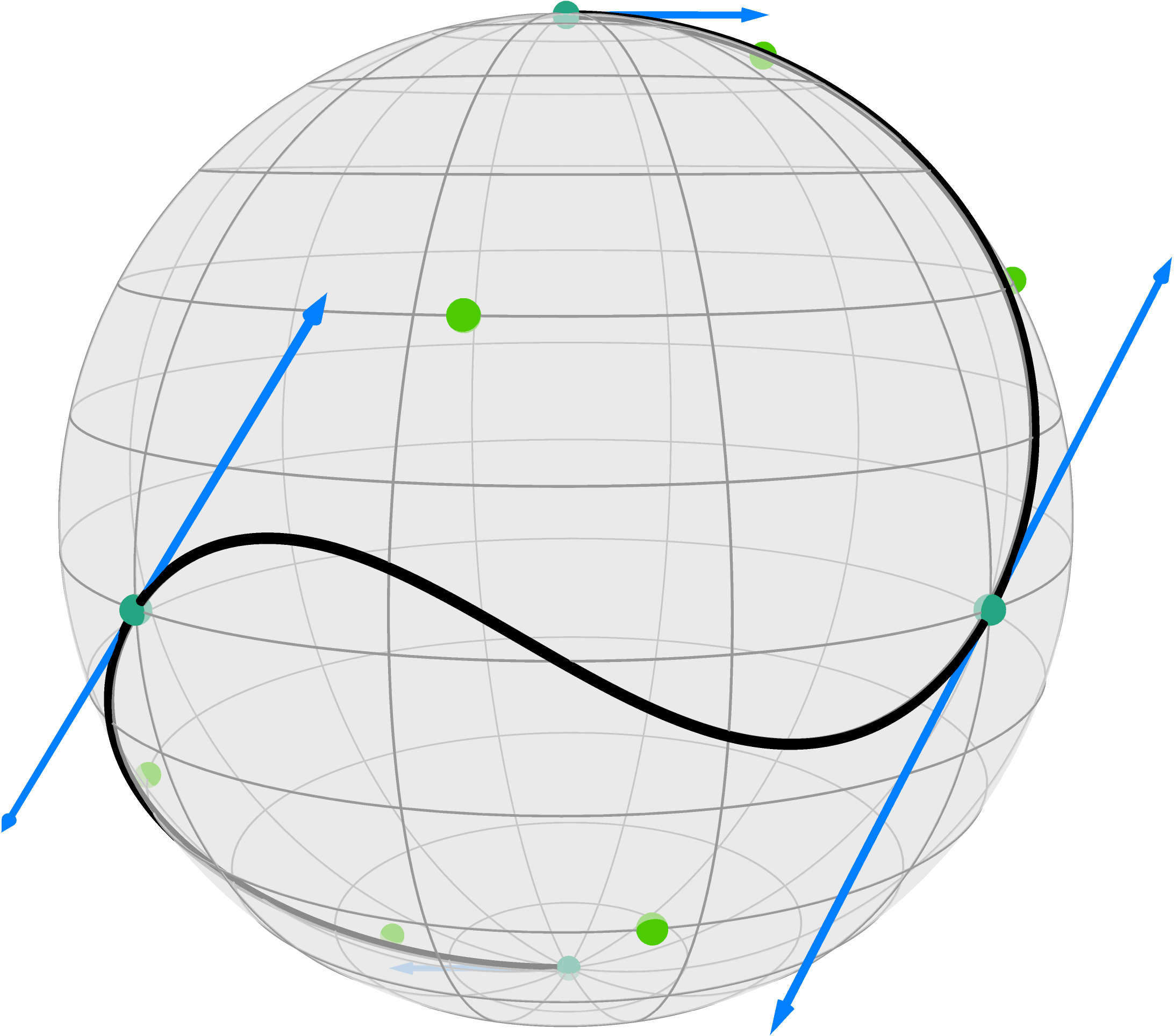}
        \caption{%
       A composite cubic Bézier curve~\(\bspline\colon [0,3] \to \mathbb S^2\).
       The end points $p_i$, $i=0,\ldots,3$, (cyan) and intermediate
       points~$b_i^{\pm}$ (green) determine its shape; continuous
       differentiability is illustrated by the logarithmic
       maps~$\log_{p_i}b_i^{\pm}$, $i\in \{1,2\}$ (blue arrows).
    }
        \label{fig:bezierS2}
    \end{figure}
    \begin{exmp}[Composite cubic Bézier curves]
      The composite cubic Bézier\\
      curve \( \bspline(t) \colon [0,n] \to \M\)
      is $C^1$ if \(p_i = g(\frac{1}{2}; b_i^-, b_i^+)\).
      See Fig.~\ref{fig:bezier} for an example on $\M = \R$ with $n=5$
      segments
      and Fig.~\ref{fig:bezierS2} for an example on $\M=\mathbb S^2$
      with $n=3$ segments.
    \end{exmp}
  %
  % Optimal composite Bezier curve
  \subsection{Discrete approximation of the mean squared acceleration}
    \label{subsec:opt_bezier}
    We discretize the mean squared acceleration (MSA) of a
    curve~$\gamma\colon [0,1]\to\M$, by discretizing the corresponding integral
    \begin{equation}
        \int_0^1 \Bigl\lVert \frac{D^2\gamma(t)}{\d t^2} \Bigr\rVert^2_{\gamma(t)} \d t,
    \end{equation}
    \ie, the regularizer from~\eqref{eq:E}.
    We approximate the squared norm of the
    second (covariant) derivative by the second order absolute finite difference
    introduced by Ba\v{c}\'ak~\etal~\cite{Bacak2016}.
    Consider three points $x,y,z \in \M$ and the \defTerm{set of mid-points of $x$ and $z$}
    \[
      \mathcal C_{x,z}\coloneqq
      \bigl\{
        c
        \,\big|\,
        c = g(\tfrac12;x,z)
        \bigr\},
    \]
    for all (not necessarily shorest) geodesics $g$ connecting $x$ and $z$.
    The \defTerm{manifold-valued second order absolute finite differences}
    is defined by
    \begin{align}\label{eq:secondOrderDifference}
      \d_2[x,y,z] = \min_{c\in\mathcal C_{x,z}}2d_\M(c,y).
    \end{align}
    This definition is equivalent, on the Euclidean space, to
    $\lVert x - 2y + z\rVert = 2\lVert \frac{1}{2}(x+z)-y\rVert$.

    Using equispaced points $t_0,\ldots,t_N$, $N\in\mathbb N$, with
    step size $\Delta_t = t_1-t_0 = \frac{1}{N}$,
    we approximate $\Bigl\lVert\frac{D^2\gamma(t_i)}{\d t^2}\Bigr\rVert_{\gamma(t_i)}
        \approx \frac{1}{\Delta_t^2}d_2[\gamma(t_{i-1}),\gamma(t_{i}),\gamma(t_{i+1})]$,
        $i=1,\ldots,N-1$, and obtain by the trapezoidal rule
    \[
    \int_0^1 \Bigl\lVert \frac{D^2\gamma(t)}{\d t^2} \Bigr\rVert^2_{\gamma(t)}\d t
    \approx
    \sum_{k=1}^{N-1}
    \frac{ \Delta_t d_2^2[\gamma(t_{i-1}),\gamma(t_i),\gamma(t_{i+1})]}{\Delta_t^4}.
    \]
    For Bézier curves $\gamma(t)=\bspline(t)$ we obtain for the regularizer
    in~\eqref{eq:E} the \defTerm{discretized MSA} \(A(\vect{b})\) that
    depends on the control points $\vect{b}$ and reads
    \begin{equation}
      \label{eq:discr_obj}
      A(\vect{b}) \coloneqq
      \sum_{i=1}^{N-1}
        \frac{\d^2_2 \left[
            \bspline(t_{i-1}),\bspline(t_{i}),\bspline(t_{i+1})
          \right]}{\Delta_t^3}.
    \end{equation}
%
% --- Gradient
\section{The gradient of the discretized mean squared acceleration}
\label{sec:Gradient}
  In order to minimize the discretized MSA \(A(\vect{b})\), we aim to employ a
  gradient descent algorithm on the product manifold \(\M^M\), where \(M\) is the number
  of elements in \(\vect{b}\). In the following, we derive a closed form of the gradient
  $\nabla_{\vect{b}} A(\vect{b})$ of the discretized MSA~\eqref{eq:discr_obj}. This gradient
  is obtained by means of a recursion and the chain rule. In fact, the 
  derivative of~\eqref{eq:secondOrderDifference} is already known~\cite{Bacak2016},
  such that it only remains to compute the derivative of the composite B\'ezier curve.

  We first introduce the following notation.
  We denote by \(D_xf[\eta](x_0)\in T_{f(x_0)\M}\) the
  directional derivative of \(f\colon\mathcal M \to\mathcal M\) evaluated at \(x_0\),
  with respect to its argument \(x\) and in the direction \(\eta\in T_x\M\).
  We use the short hand \(D_xf[\eta] = D_xf[\eta](x)\) whenever this
  directional derivative is evaluated afterwards again
  at~\(x\).
  
  We now state the two following definitions, which are crucial for the rest of this section.

  \begin{defn}[{Gradient~\cite[Eq.~(3.31), p.~46]{Absil2008}}]
    \label{def:gradient}
    Let $f\colon \M \to \R$ be a real-valued function on a manifold $\M$,
    $x \in \M$ and $\eta \in T_x \M$.

    The \emph{gradient} $\nabla_\M f(x)\in T_x\mathcal M$ of $f$ at $x$
    is defined as the tangent vector that fulfills
    \begin{equation}\label{eq:defgrad}
      \langle \nabla_{\M}f(x),\eta \rangle_x = D_x f[\eta]
      \quad\text{for all}\quad \eta\in T_x\mathcal M.
    \end{equation}
  \end{defn}
  For multivariate functions~\(f(x,y)\), we denote the gradient of
  \(f\) with respect to \(x\) at \((x_0,y_0)\) by writing
  \(\nabla_{\M,x}f(x_0,y_0)\). We shorten this notation as \(\nabla_{\M,x}f =
  \nabla_{\mathcal M,x} f(x,y)\) when this gradient is seen as a function
  of \(x\) (and \(y\)).
  \begin{defn}[{Chain rule on manifolds~\cite[p.~195]{Absil2008}}] \label{def:chain}
    Let $f\colon \M \to \M$, $h\colon\M \to \M$ be two functions on a manifold $\M$ and $F\colon\M \to \M\),
    \(x\mapsto F(x) = (f \circ h)(x) = f(h(x))$,
    their composition.
    Let $x \in \M$ and $\eta \in T_x \M$.
    The directional derivative \(D_xF[\eta]\) of $F$ with respect to $x$
    in the direction $\eta$ is given by
    \begin{equation}\label{eq:chainrule}
      D_x F[\eta] = D_{h(x)} f\bigl[D_x h[\eta]\bigr],
    \end{equation}
    where $D_x h[\eta] \in T_{h(x)}\M$ and~$D_x F[\eta] \in T_{F(x)}\M$.
  \end{defn}

  The remainder of this section is organized in four parts. We first recall the
  theory on Jacobi fields in Section~\ref{subsec:Jacobi} and their relation to
  the differential of geodesics (with respect to start and end point). 
  In Section~\ref{subsec:coupledGeodesics}, we
  apply the chain rule to the composition of two geodesics, which appears
  within the De Casteljau algorithm. We use this result to build an algorithmic derivation of the differential of
  a general Bézier curve on manifolds with respect to its control points 
  (Section~\ref{subsec:derive-bezier}). We extend the result to composite Bézier
  curves in Section~\ref{subsec:coupledBezier}, including their constraints on
  junction points \(p_i\) to enforce the \(C^1\) condition~\eqref{eq:c1cond}, and finally
  gather these results
  to state the gradient~\(\nabla_{\mathcal M} A(\vect{b})\) of the discretized
  MSA~\eqref{eq:discr_obj} with respect to the control points. 
  
  % Derivative of a gerodesic
  \subsection{Jacobi fields as derivative of a geodesic}
  \label{subsec:Jacobi}
    In the following, we introduce a closed form of the
    differential~\(D_x g(t;\cdot,y)\) of a
    \\geodesic~\(g(t;x,y)\), \(t\in[0,1]\),
    with respect to its start point \(x\in\M\). The differential with respect to
    the end point \(y\in\M\) can be obtained by taking the
    geodesic~\(g(t,y,x) = g(1-t; x,y)\).

    \begin{figure}[tbp]
      \centering
      \includegraphics{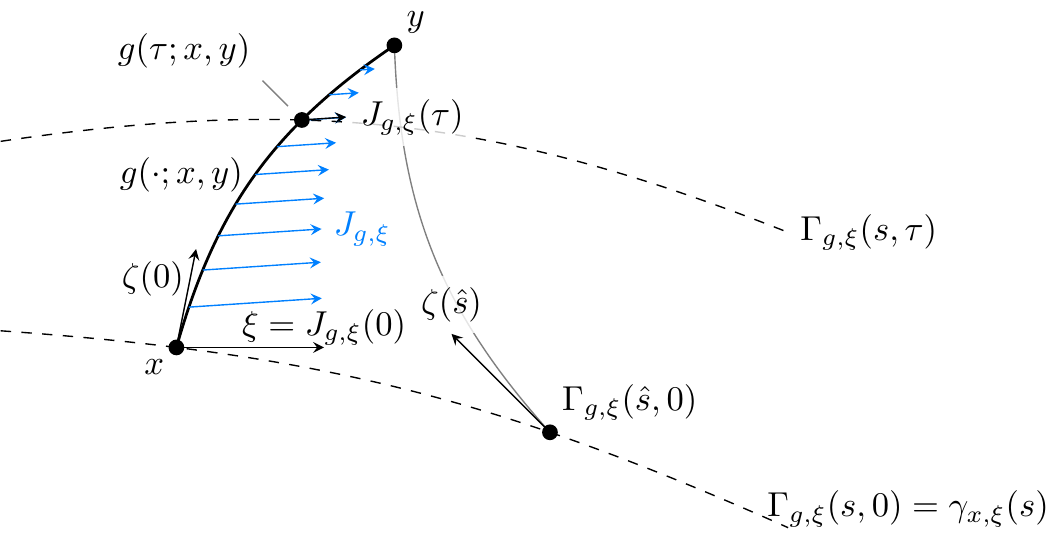}
      \caption{Schematic representation of the variation
        $\Gamma_{g,\xi}(s,t)$ of a geodesic $g$
        w.r.t.~the direction $\xi \in T_x\M$. The
        corresponding Jacobi field along $g$ and in the
        direction $\xi$
        is the vector field
        $J_{g,\xi}(t)
          =
          \frac{\partial}{\partial s}
          \Gamma_{g,\xi}(s,t)\rvert_{s=0}$.
         }
         \label{fig:jacobi}
    \end{figure}

    As represented in Fig.~\ref{fig:jacobi}, we denote by \(\gamma_{x,\xi}\),
    the geodesic starting in \(\gamma_{x,\xi}(0)=x\) and with
    direction~\(\frac{D}{dt}\gamma_{x,\xi}(0) =\xi\in T_x\M\). We introduce
    \(\zeta(s)\coloneqq \Log{\gamma_{x,\xi}(s)}{y}\), the tangential vector in
    \(T_{\gamma_{x,\xi}(s)}\M\) pointing towards \(y\). Then, the
    \emph{geodesic variation} \(\Gamma_{g,\xi}(s,t)\) of the geodesic
    \(g(\cdot;x,y)\) with respect to the tangential direction \(\xi\in T_x\M\)
    is given by
    \[
       \Gamma_{g,\xi}(s,t) \coloneqq \Exp{\gamma_{x,\xi}(s)}[t\zeta(s)],
       \qquad s\in(-\varepsilon,\varepsilon),\ t\in[0,1],
    \]
    where \(\varepsilon > 0\). The corresponding Jacobi field~\(J_{g,\xi}\)
    along~\(g\) is then given by the vector field
    \[
      J_{g,\xi}(t)
      \coloneqq
      \frac{D}{\partial s}\Gamma_{g,\xi}(s,t)\Bigl\rvert_{s=0}
    \]
    that represents the direction of the displacement of
    $g$ if $x$ is perturbed in a direction $\xi$.
    
    We directly obtain~\(J_{g,\xi}(0) = \xi\), and \(J_{g,\xi}(1) =
    0\) as well as~\(J_{g,\xi}(t)\in T_{g(t;x,y)}\M\). Since
    \(\Gamma_{g,\xi}(s,t) = g(t; \gamma_{x,\xi}(s), y)\) we obtain by the chain
    rule
    \begin{equation}
    \label{eq:JacobiDeriv}
      D_x g(t,\cdot,y)[\xi]
      = \frac{d}{ds}\, g(t; \gamma_{x,\xi}(s), y) \Bigr\rvert_{s=0}
      = \frac{d}{ds}\Gamma_{g,\xi}(s,t)\Bigr\rvert_{s=0} = J_{g,\xi}(t).
    \end{equation}
    
    \begin{remark}
    This relation between the derivative of geodesics and
    Jacobi fields is of high interest on symmetric spaces, where Jacobi fields
    can be computed in closed form, as summarized in the following Lemma.
  \end{remark}
    \begin{lem}{\cite[Prop.~3.5]{Bacak2016}}
      \label{lem:symspace}
      Let $\M$ be a $m$-dimensional Riemannian manifold.
      Let \(g(t; x,y)\), \(t\in [0,1]\), be a geodesic between $x,y\in\M$, \(\eta\in T_x\M\) a
      tangent vector and \(\{\xi_1,\ldots\xi_m\}\) be an orthonormal basis
      (ONB) of \(T_x\M\) that diagonalizes the curvature operator of $\M$ with eigenvalues
      $\kappa_\ell$, $\ell=1,\dots,m$. For details, see Ch.~4.2 and~5 (Ex.~5) of~\cite{doCarmo92}.
      Let further denote by~\(\{\Xi_1(t),\ldots,\Xi_m(t)\}\) the parallel transported frame
      of $\{\xi_1,\dots,\xi_m\}$ along $g$. 
      Decomposing~$\eta = \sum_{\ell=1}^m \eta_\ell\xi_\ell \in T_x \M$, 
      the derivative \(D_x g[\eta]\) becomes
      \[
      D_x g(t;x,y) [\eta] = J_{g,\eta}(t) = \sum_{\ell=1}^m \eta_\ell J_{g,\xi_\ell}(t),
      \]
      where the Jacobi field~\(J_{g,\xi_\ell}\colon\mathbb R \to T_{g(t;x,y)}\M\) along \(g\) and in the
      direction~\(\xi_\ell\) is given by
      \begin{equation}\label{eq:DxGeoJacobi}
        J_{g,\xi_\ell}(t) =
        \begin{cases}
          \frac{\operatorname{sinh}\bigl(d_g(1-t)\sqrt{-\kappa_\ell}\bigr)
          }{%
            \sinh(d_g\sqrt{-\kappa_\ell})
          }
          \Xi_\ell(t)&\mbox{if }\kappa_\ell<0,\\
          \frac{  \sin\bigl(d_g(1-t)\sqrt{\kappa_\ell}\bigr) }%
          {  \sin(\sqrt{\kappa_\ell}d_g)}
          \Xi_\ell(t)&\mbox{if }\kappa_\ell>0,\\
          1-t\Xi_\ell(t)&\mbox{if }\kappa_\ell=0,
        \end{cases}
      \end{equation}
      with \(d_g = d(x,y)\) denoting the length of the
      geodesic~\(g(t;x,y), t\in[0,1]\).
    \end{lem}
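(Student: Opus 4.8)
The plan is to prove Lemma~\ref{lem:symspace} by reducing the general directional derivative to a sum over an eigenbasis of the curvature operator, and then to solve the scalar Jacobi equation in each eigendirection separately.

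First I would recall from Eq.~\eqref{eq:JacobiDeriv} that $D_x g(t;\cdot,y)[\eta] = J_{g,\eta}(t)$, where $J_{g,\eta}$ is the Jacobi field along $g$ with initial conditions $J_{g,\eta}(0) = \eta$ and $J_{g,\eta}(1) = 0$ (the latter because the endpoint $y$ is held fixed in the variation). Since the Jacobi equation $\frac{D^2}{dt^2}J + R(J,\dot g)\dot g = 0$ is linear in $J$, the map $\eta \mapsto J_{g,\eta}$ is linear, so decomposing $\eta = \sum_\ell \eta_\ell \xi_\ell$ immediately gives $J_{g,\eta} = \sum_\ell \eta_\ell J_{g,\xi_\ell}$. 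It remains to compute each $J_{g,\xi_\ell}$.

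Next I would use the parallel-transported frame $\{\Xi_1(t),\dots,\Xi_m(t)\}$ along $g$ to write $J_{g,\xi_\ell}(t) = \sum_k f_{\ell k}(t)\,\Xi_k(t)$. Because $\Xi_k$ is parallel, $\frac{D^2}{dt^2}J_{g,\xi_\ell} = \sum_k f_{\ell k}''(t)\,\Xi_k(t)$, where $'$ is the ordinary derivative in $t$. The key structural fact is that on a symmetric space the curvature tensor is parallel, so if $\{\xi_\ell\}$ diagonalizes the curvature operator $v \mapsto R(v,\dot g(0))\dot g(0)$ at $x$ with eigenvalues $\kappa_\ell$ (rescaled by $d_g^2$ since $\|\dot g\| = d_g$), then the parallel-transported frame diagonalizes it along the whole geodesic with the same eigenvalues. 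Hence the Jacobi equation decouples into the scalar ODEs $f_{\ell}''(t) + d_g^2\,\kappa_\ell\, f_\ell(t) = 0$ for the coefficient of $\Xi_\ell$, and the off-diagonal coefficients vanish. Solving with boundary conditions $f_\ell(0) = 1$, $f_\ell(1) = 0$ yields the three cases of Eq.~\eqref{eq:DxGeoJacobi}: $\frac{\sin(d_g(1-t)\sqrt{\kappa_\ell})}{\sin(d_g\sqrt{\kappa_\ell})}$ for $\kappa_\ell > 0$, $\frac{\sinh(d_g(1-t)\sqrt{-\kappa_\ell})}{\sinh(d_g\sqrt{-\kappa_\ell})}$ for $\kappa_\ell < 0$, and $1-t$ for $\kappa_\ell = 0$, each multiplying $\Xi_\ell(t)$.

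The main obstacle — and the place where the symmetric-space hypothesis is essential — is justifying that the parallel-transported frame continues to diagonalize the curvature operator along $g$, so that the Jacobi system genuinely decouples; without $\nabla R = 0$ the off-diagonal terms $f_{\ell k}$, $k \neq \ell$, need not vanish and no closed form of this type exists. This is exactly the content cited from \cite[Ch.~4.2 and~5]{doCarmo92}, so in the write-up I would invoke it rather than reprove it, and then the remaining work is the routine integration of three constant-coefficient scalar ODEs and a check of the boundary data, together with the sanity checks $J_{g,\xi_\ell}(0) = \xi_\ell$ and $J_{g,\xi_\ell}(1) = 0$.
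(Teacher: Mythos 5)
Your argument is sound, but note that the paper itself does not prove this lemma at all: it is imported verbatim from \cite[Prop.~3.5]{Bacak2016} (the only "proof-like" content in the paper is the identification \(D_xg(t;\cdot,y)[\eta]=J_{g,\eta}(t)\) in \eqref{eq:JacobiDeriv}, which you also use as your starting point). What you have written is the standard derivation behind that cited result, and it is essentially correct: fixing \(y\) gives the boundary conditions \(J(0)=\eta\), \(J(1)=0\); linearity of the Jacobi equation justifies the decomposition \(J_{g,\eta}=\sum_\ell\eta_\ell J_{g,\xi_\ell}\); on a (locally) symmetric space \(\nabla R=0\) makes the parallel-transported frame remain an eigenframe of \(v\mapsto R(v,\dot g)\dot g\) along \(g\), so the system decouples into \(f_\ell''+d_g^2\kappa_\ell f_\ell=0\) (the factor \(d_g^2\) coming from \(\lVert\dot g\rVert=d_g\) for the \([0,1]\)-parametrization), and the stated coefficients are the unique solutions with \(f_\ell(0)=1\), \(f_\ell(1)=0\). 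Two small points you should make explicit in a write-up: the symmetry hypothesis is not in the lemma's own wording but is a standing assumption of the paper and, as you say, indispensable; and the case \(\kappa_\ell>0\) tacitly requires \(\sin(d_g\sqrt{\kappa_\ell})\neq 0\), i.e.\ that \(y\) is not conjugate to \(x\) along \(g\), consistent with the paper's assumption that \(y\) lies in the domain where \(\Log{x}\) is defined. Also note the typo in the paper's third case of \eqref{eq:DxGeoJacobi}: the coefficient is \((1-t)\Xi_\ell(t)\), exactly as your ODE solution gives.
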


    The Jacobi field of the reversed geodesic \(\bar
    g(t) \coloneqq g(t;y,x) = g(1-t;x,y)\) is obtained using the same
    orthonormal basis and transported frame
    but evaluated at \(s=1-t\). We thus obtain \(D_y
    g(t;x,y)[\xi_\ell] = D_y g(1-t;y,x)[\xi_\ell] = J_{\bar g,\xi_\ell}(1-t)\), where
    \[
      J_{\bar g,\xi_\ell}(1-t)
      =
      \begin{cases}
        \frac{\operatorname{sinh}\bigl(d_gt\sqrt{-\kappa_\ell}\bigr)
        }{%
          \sinh(d_g\sqrt{-\kappa_\ell})
        }
        \Xi_\ell(t)&\mbox{if }\kappa_\ell<0,\\
        \frac{  \sin\bigl(d_gt\sqrt{\kappa_\ell}\bigr) }%
        {  \sin(d_g\sqrt{\kappa_\ell})}
        \Xi_\ell(t)&\mbox{if }\kappa_\ell>0,\\
        t\Xi_\ell(t)&\mbox{if }\kappa_\ell=0.
      \end{cases}
    \]
  %
  % Derivative of a coupled Geodesic
  \subsection{Derivative of coupled geodesics}
  \label{subsec:coupledGeodesics}
    Let $\M$ be a symmetric Riemannian manifold.
    We use the result of Lemma~\ref{lem:symspace} to directly compute the
    derivative of coupled geodesics, \ie, a function composed
    of~\(g_1(t) \coloneqq g(t;x,y)\) and \(g_2(t) \coloneqq g(t;g_1(t),z)\).
    By Definition~\ref{def:chain}, we have
    \[
      D_x g_2(t)[\eta]
      = D_{g_1(t)}g(t;\cdot,z)\bigl[D_xg_1(t)[\eta]\bigr]
    \]
    and by~\eqref{eq:JacobiDeriv}, we obtain
    \[
      D_x g_2(t)[\eta]
      = J_{g_2,D_xg_1(t)[\eta]}(t),
    \]
    where the variation direction used in the Jacobi field is now the
    derivative of $g_1(t)$ in direction $\eta$.
    Similarily, we compute the derivative of a reversed coupled
    geodesic~$g_3(t) \coloneqq g(t;z,g_1(t))$ as
    \[
      D_x g_3(t)[\eta]
      = D_{g_1(t)}g(t;z,\cdot)\bigl[D_xg_1(t)[\eta]\bigr]
      = J_{\bar g_3,D_xg_1(t)[\eta]}(1-t).
    \]
    Note that the Jacobi field is here reversed, but that its variation
    direction is the same as the one of the Jacobi field introduced for
    $g_2(t)$. In a computational perspective, it means that we can use the same ONB
    for the derivatives of both~\(g_3\) and \(\bar g_3\)
    Furthermore, in this case, the variation direction is also computed by
    a Jacobi field since~$D_xg_1(t)[\eta] = J_{g_1,\eta}(t)$.

    Finally the derivative of $g_2$ (resp.~$g_3$) on symmetric spaces
    is obtained as follows. Let
    $\{ \xi_1^{[1]},\dots,\xi_m^{[1]} \}$ be an ONB of $T_x\M$ for the
    inner Jacobi field along \(g_1\), and $\{
    \xi_1^{[2]},\dots,\xi_m^{[2]} \}$ be an ONB
    of $T_{g_1(t)}\M$ for the outer Jacobi
    field along $g_2$ (resp. $g_3$). As $\eta = \sum_{\ell=1}^m \eta_\ell
    \xi^{[1]}_\ell \in T_x\M$, and stating $J_{g_1,\xi^{[1]}_\ell}(t) = \sum_{l=1}^m
    \mu_l \xi_l^{[2]} \in T_{g_1(t)}\M$, the derivative of $g_2$ (resp. $g_3$)
    with respect to $x$ in the direction $\eta \in T_x\M$ reads
    \begin{equation}
    \label{eq:coupledDeriv}
    D_x g_2(t)[\eta]
    = \sum_{l=1}^m \sum_{\ell=1}^m J_{g_2,\xi^{[2]}_l}(t) \mu_l \eta_\ell,
    \end{equation}
    and accordingly for $g_3$.
  %
  % Derivative of a bezier curve
  \subsection{Derivative of a Bézier curve}\label{subsec:derive-bezier}
    Sections~\ref{subsec:Jacobi} and~\ref{subsec:coupledGeodesics} introduced
    the necessary concepts to compute the derivative of a general Bézier curve
    \(\beta_K(t;b_0,\dots,b_K)\), as described in Equation~\eqref{eq:castel},
    with respect to its control points $b_j$.
    For readability of the recursive structure investigated in the following,
    we introduce a slightly simpler notation and the following setting.
    
    Let $K$ be the degree of a Bézier curve $\beta_K(t;b_0,\dots,b_K)$ with
    the control points $b_0,\dots,b_K\in\M$. We fix \(k\in \{1,\ldots,K\}\),
    \(i\in\{0,\ldots,K-k\}\) and \(t\in[0,1]\). We introduce
    \begin{equation}
      \label{eq:short_bezier_notation}
      g_i^\bc k (t) \coloneqq g(t; g_i^\bc {k-1}(t), g_{i+1}^\bc {k-1}(t))
        = \beta_i^\bc k (t; b_i,\dots,b_{i+k}),
    \end{equation}
    for the \(i^\text{th}\) Bézier curve of degree \(k\) in the De Casteljau
    algorithm, and \(g_i^\bc{0}(t) = b_i\). 
    
    Furthermore, given $x \in \{ b_i,\dots,b_{i+k} \}$, we denote by
    \begin{equation}
    \label{eq:etaNotation}
    \eta_i^\bc k \coloneqq D_x g_i^\bc k(t)[\eta],
    \end{equation}
    its derivative with respect to one of its control points~\(x\)
    in the direction \(\eta \in T_x\M\). 
    
    \begin{remark}
      Clearly any other derivative of $g_i^\bc{k}$ with respect
      to \(x = b_j\), \(j<i\)  or \(j>i+k\) is zero. In addition we have
    \(\eta_i^\bc{0}=D_xg_i^\bc{0}[\eta] = \eta\) for \(x=b_i\) and zero otherwise.
    \end{remark}
    \begin{thm}[Derivative of a Bézier curve]
      \label{thm:bezier_derivative}
      Let \(k\in \{1,\ldots,K\}\) and \(i\in\{0,\ldots,K-k\}\) be given.
      The derivative \(\eta_i^\bc{k} = D_x g_i^\bc k(t)[\eta]\) of \(g_i^\bc k\)
      with respect to its control point \(x \coloneqq b_j\), \(i \leq j \leq i+k\),
      and in the direction \(\eta \in T_x \M\) is given by
      \begin{equation*}
        \eta_i^\bc k \coloneqq D_x g_i^\bc k (t) [\eta]
        =
        \begin{cases}
          J_{g_i^\bc{k-1},\eta_i^\bc{k-1}}(t)
            &\text{ if } j = i,\\
          J_{g_i^\bc{k-1},\eta_i^\bc{k-1}}(t)
            + J_{\bar g_{i+1}^\bc{k-1},\eta_{i+1}^\bc{k-1}}(1-t)
            &\text{ if } i<j<i+k,\\
          J_{\bar g_{i+1}^\bc{k-1},\eta_{i+1}^\bc{k-1}}(1-t)
            &\text{ if } j = i+k.
        \end{cases}
      \end{equation*}
    \end{thm}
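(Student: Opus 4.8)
The plan is to proceed by induction on the degree $k$ of the De Casteljau sub-curve $g_i^\bc{k}$, using the recursive definition~\eqref{eq:short_bezier_notation}, namely $g_i^\bc k(t) = g\bigl(t; g_i^\bc{k-1}(t), g_{i+1}^\bc{k-1}(t)\bigr)$, together with the chain rule on manifolds (Definition~\ref{def:chain}) and the identification of directional derivatives of geodesics with Jacobi fields from Section~\ref{subsec:Jacobi}. The base case $k=1$ is immediate: $g_i^\bc 1(t) = g(t;b_i,b_{i+1})$, so differentiating with respect to $x=b_i$ gives $D_x g_i^\bc 1(t)[\eta] = D_x g(t;\cdot,b_{i+1})[\eta] = J_{g_i^\bc 1,\eta}(t)$ by~\eqref{eq:JacobiDeriv}, matching the first branch ($j=i$) since $\eta_i^\bc 0 = \eta$; differentiating with respect to $x = b_{i+1}$ gives, via the reversed geodesic, $D_x g_i^\bc 1(t)[\eta] = J_{\bar g_{i+1}^\bc 0\text{-type}}(1-t)$, matching the last branch ($j=i+k$); and the middle branch is vacuous when $k=1$.

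For the inductive step, fix $k\ge 2$ and assume the formula holds for all sub-curves of degree $k-1$. Write $g_i^\bc k(t) = g\bigl(t;\, u(t),\, v(t)\bigr)$ with $u(t) = g_i^\bc{k-1}(t)$ and $v(t) = g_{i+1}^\bc{k-1}(t)$. Since $g_i^\bc k$ depends on $x=b_j$ only through $u$ (when $i\le j\le i+k-1$) and through $v$ (when $i+1\le j\le i+k$), apply the chain rule to the composition $x \mapsto \bigl(u(x),v(x)\bigr) \mapsto g(t;u,v)$. This yields
\[
  D_x g_i^\bc k(t)[\eta]
  = D_u g(t;\cdot,v)\bigl[D_x u(t)[\eta]\bigr]
    + D_v g(t;u,\cdot)\bigl[D_x v(t)[\eta]\bigr],
\]
where one of the two terms vanishes at the extremes $j=i$ and $j=i+k$. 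Now invoke~\eqref{eq:JacobiDeriv} on each summand: the first term equals $J_{g_i^\bc k,\, D_x u(t)[\eta]}(t)$ with variation direction $D_x u(t)[\eta] = D_x g_i^\bc{k-1}(t)[\eta] = \eta_i^\bc{k-1}$ by the induction hypothesis and the notation~\eqref{eq:etaNotation}; and, using the reversed geodesic as in Section~\ref{subsec:coupledGeodesics}, the second term equals $J_{\bar g_{i+1}^\bc k,\, D_x v(t)[\eta]}(1-t)$ with variation direction $D_x v(t)[\eta] = \eta_{i+1}^\bc{k-1}$. Splitting into the three cases $j=i$ (only $u$ depends on $b_j$), $i<j<i+k$ (both $u$ and $v$ do), and $j=i+k$ (only $v$ does) gives exactly the three branches claimed. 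Here I am slightly abusing the index in $J_{g_i^\bc k,\cdot}$: the outer geodesic of $g_i^\bc k$ is $g(t;u,v)$, so $J_{g_i^\bc{k-1},\cdot}(t)$ in the statement should be read as the Jacobi field along the geodesic whose derivative produces $g_i^\bc{k}$ from a perturbation of its left endpoint — I would make this bookkeeping precise by consistently writing the Jacobi field along the segment $g_i^\bc{k}$ itself and noting that its left endpoint is $g_i^\bc{k-1}(t)$.

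The main obstacle, and the point deserving the most care, is keeping the index bookkeeping straight: one must verify that perturbing $b_j$ for $i<j<i+k$ genuinely propagates into \emph{both} $g_i^\bc{k-1}$ and $g_{i+1}^\bc{k-1}$ (it does, since $b_j$ lies in the overlap $\{b_i,\dots,b_{i+k-1}\}\cap\{b_{i+1},\dots,b_{i+k}\}$), and that the two Jacobi-field contributions are genuinely additive rather than, say, requiring parallel transport to a common tangent space before summing — this is fine because both $J_{g_i^\bc k,\eta_i^\bc{k-1}}(t)$ and $J_{\bar g_{i+1}^\bc k,\eta_{i+1}^\bc{k-1}}(1-t)$ live in $T_{g_i^\bc k(t)}\M$, each being the derivative of the same map $g_i^\bc k$ evaluated at $t$. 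I would close by remarking that the recursion grounds out at $\eta_i^\bc 0 = \eta$ (for $x=b_i$, zero otherwise), so unrolling it expresses $\eta_i^\bc k$ as a nested concatenation of Jacobi fields along the De Casteljau geodesics, which is precisely the structure exploited in the algorithm of Section~\ref{sec:Numerics}.
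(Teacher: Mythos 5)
Your proof is correct and takes essentially the same route as the paper: induction on $k$, the chain rule applied to $g_i^\bc{k}(t)=g\bigl(t;g_i^\bc{k-1}(t),g_{i+1}^\bc{k-1}(t)\bigr)$, identification of the two resulting terms with a Jacobi field and a reversed Jacobi field via~\eqref{eq:JacobiDeriv}, and the observation that one term vanishes in the extreme cases $j=i$ and $j=i+k$. Your bookkeeping remark about reading the Jacobi field as taken along the outer geodesic of $g_i^\bc{k}$ (whose left endpoint is $g_i^\bc{k-1}(t)$) is precisely how the paper's own proof handles it, setting $f\coloneqq g_i^\bc{k}(t)=g(t;a,b)$ and writing $J_{f,\cdot}(t)$ and $J_{\bar f,\cdot}(1-t)$.
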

    \begin{proof}
      Let fix \(t\in[0,1]\)
      and $x=b_j$, $i\leq j \leq i+k$. For readability we
      set \(a \coloneqq g_i^\bc {k-1}(t)\), \(b \coloneqq g_{i+1}^\bc
      {k-1}(t)\), and \(f \coloneqq g_i^\bc k(t) = g(t;a,b)\). Note that while
      \(f\) depends on the control points \(b_i,\ldots,b_{i+k}\) and is a
      Bézier curve of degree \(k\), both \(a\) and \(b\) are Bézier curves of
      degree \(k-1\). The former does not depend
      on~$b_{i+k}$, and the latter is independent of \(b_{i}\).

      We prove the claim by induction. For \(k=1\) the function \(g_i^\bc{1}\)
      is just a geodesic. The case \(i<j<i+1\) does not occur and the remaining
      first and third cases follow by the notation introduced for \(k=0\) and
      Lemma~\ref{lem:symspace}.

      For \(k>1\) we apply the chain rule~\eqref{eq:chainrule} to~\(D_xf[\eta]\)
      and obtain
      \[
        D_x f[\eta]
        = D_a f \bigl[ D_x a [\eta]\bigr]
        +
        D_b f \bigl[ D_x b [\eta]\bigr].
      \]
      Consider the first term \(D_af\bigl[D_xa[\eta]\bigr]\) and $j < i+k$. 
      By~\eqref{eq:JacobiDeriv} and the notation
      from~\eqref{eq:etaNotation}, one directly has
      \[
        D_a f\bigl[\eta_i^\bc{k-1}\bigr]
        = J_{f, \eta_i^\bc{k-1}}(t).
      \]
      For \(j=i+k\), clearly~\(D_xa[\eta]=D_af[D_xa[\eta]]=0\), as $a$ does not
      depend on~$b_{i+k}$.

      We proof the second term similarily. For \(j>i\), by applying the chain
      rule and using the reversed Jacobi field formulation of
      Lemma~\ref{lem:symspace} for the derivative of a geodesic with respect to its end point, we obtain
      \[
        D_{b} f\bigl[\eta_{i+1}^\bc{k-1}\bigr]
        = J_{\bar f, \eta_{i+1}^\bc{k-1}}(1-t).
      \]
      Finally, as~\(D_xb[\eta]=D_bf[D_xb[\eta]]=0\) for \(x=b_i\), the
      assumption follows.
    \end{proof}
    Fig.~\ref{fig:derivation_tree} represents one level of the
    schematic propagation tree to compute the derivative of a Bézier curve.
    \begin{figure}[tpb]
      \centering
      \begin{subfigure}[t]{.28\textwidth}
          \centering
          \includegraphics{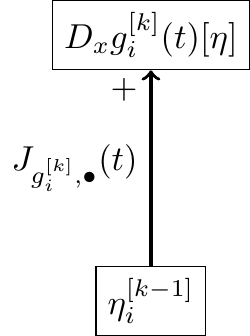}
          \caption{The case $x=b_i$.}
      \end{subfigure}
      \begin{subfigure}[t]{.4\textwidth}
      \centering
          \includegraphics{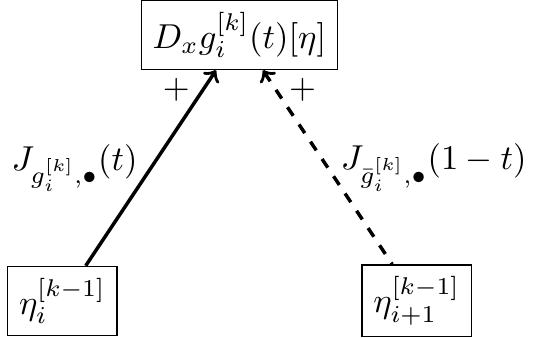}
          \caption{The cases $x\in\{b_{i+1},\ldots,b_{i+k-1}\}$.}
      \end{subfigure}
      \begin{subfigure}[t]{.28\textwidth}
      \centering
          \includegraphics{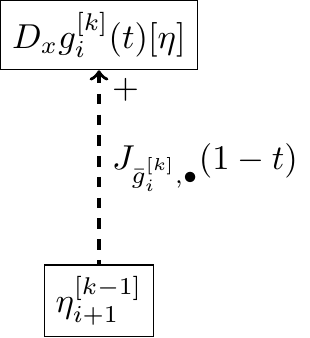}
          \caption{The case $x=b_{i+k}$.}
      \end{subfigure}
      \caption{Schematic representation of the cases where elements compose
        the chained derivative of the \(i^\text{th}\) Bézier curve
        of order \(k\) in the De Casteljau algorithm.
        The solid line represents a Jacobi
        field along \(g_i^\bc k\), while the dashed one represents a
        \emph{reversed} Jacobi field.}
      \label{fig:derivation_tree}
    \end{figure}

    \begin{exmp}[Quadratic Bézier curve]
      \label{ex:quadratic}
      Consider the quadratic Bézier curve \( \beta_2\colon [0,1] \to \M \)
      defined as
      \[
        \beta_2(t; b_0,b_1,b_2)
        =
        g\bigl(
        t;
        g(t; b_0,b_1),
        g(t;b_1,b_2)
        \bigr).
      \]
      Using the notations~\eqref{eq:short_bezier_notation}, we have
      \begin{align*}
        g_0^\bc 1(t) &\coloneqq g(t; b_0, b_1),\quad
        g_1^\bc 1(t) \coloneqq g(t; b_1, b_2), \\
        g_0^\bc 2(t) &\coloneqq g(t;g_0^\bc 1,g_1^\bc 1).
      \end{align*}
      The derivative of \(\beta_2\) at $t$ with respect to \(b_0\) in the
      direction \(\eta \in T_{b_o}\M\), is given by
      \[
        D_{b_0}\beta_2[\eta] = J_{g_0^\bc 2, \eta_0^\bc 1}(t),
        \text{ with }
        \eta_0^\bc 1 \coloneqq J_{g_0^\bc 1, \eta}(t).
      \]
      The derivative of \(\beta_2\) at $t$ with respect to \(b_2\) in the
      direction \(\eta \in T_{b_2}\M\) can be seen as deriving by the first
      point after inverting the Bezier curve, i.e.~looking at \(\bar\beta_2(t)
      = \beta_2(1-t)\), hence we have analogously to the first term
      \[
        D_{b_2}\beta_2[\eta] = J_{\bar g_0^\bc 2, \eta_1^\bc 1}(1-t),
        \text{ with }
        \eta_1^\bc 1 \coloneqq J_{\bar g_1^\bc 1, \eta}(1-t).
      \]
      The case~\(D_{b_1}\beta_2[\eta]\), \(\eta \in T_{b_1}\M\), involves a
      chain rule, where \(b_1\) appears in both~\(g_0^\bc 1\) (as its starting point) and~\(g_1^\bc
      1\) (as its end point). Using the two intermediate results (or Jacobi fields of geodesics)
      \begin{equation*}
        \eta_0^\bc 1  \coloneqq J_{\bar g_0^\bc 1, \eta}(1-t)
        \quad\text{and}\quad
        \eta_1^\bc 1  \coloneqq J_{     g_1^\bc 1, \eta}(  t),
      \end{equation*}
      we obtain
      \[
        D_{b_1}\beta_2[\eta] =
        J_{g_0^\bc 2, \eta_0^\bc 1}(t)
        +
        J_{\bar g_0^\bc 2, \eta_1^\bc 1}(1-t).
      \]
    \end{exmp}

    \begin{exmp}[Cubic Bézier curve]
      Consider the cubic Bézier curve \( \beta_3\colon [0,1] \to \M \) defined
      as
      \[
        \beta_3(t; b_0,b_1,b_2,b_3)
        =
        g\bigl(
          t;
          \beta_2(t; b_0,b_1,b_2),
          \beta_2(t; b_1,b_2,b_3)
        \bigr).
      \]
      As in Example~\ref{ex:quadratic}, we use the
      notations~\eqref{eq:short_bezier_notation} and define
      \begin{align*}
        g_j^\bc 1(t) & \coloneqq g(t; b_j, b_{j+1}), \qquad j = 0,1,2, \\
        g_j^\bc 2(t) & \coloneqq g(t; g_j^\bc 1, g_{j+1}^\bc 1),
          \qquad j = 0,1, \text{ and }\\
        g_0^\bc 3(t) & \coloneqq g(t;g_0^\bc 2,g_1^\bc 2).
      \end{align*}

      The derivation of \(\beta_3\) with respect to \(b_0\) or \(b_3\) follows
      the same structure as in Example~\ref{ex:quadratic}. The case of \(
      D_{b_1}\beta_3[\eta] \), however, requires two chain rules. The needed
      Jacobi fields follow the tree structure shown in Fig.~\ref{fig:der_tree}:
      given \(\eta \in T_{b_1}\M\), we define at the first recursion step
      \begin{equation*}
        \eta_0^\bc 1 \coloneqq  J_{\bar g_0^\bc 1, \eta}(1-t),
        \qquad
        \eta_1^\bc 1 \coloneqq  J_{     g_1^\bc 1, \eta}(  t),
      \end{equation*}
      and at the second recursion step
      \begin{equation*}
        \eta_0^\bc 2 \coloneqq  J_{g_0^\bc 2, \eta_0^\bc 1}(t)
          + J_{\bar g_0^\bc 2, \eta_1^\bc 1}(1-t),
        \qquad
        \eta_1^\bc 2 \coloneqq  J_{g_1^\bc 2, \eta_1^\bc 1}(t).
      \end{equation*}
      Note that both \(\eta_0^\bc 2\) and \(\eta_1^\bc 2\) are actually the derivative of
      \(\beta_2(t;b_0,b_1,b_2)\) and \(\beta_2(t;b_1,b_2,b_3)\), respectively, with respect to \(b_1\) and direction
      \(\eta\in T_{b_1}\M\). Finally we have
      \[
        D_{b_1}\beta_3[\eta] =
        J_{g_0^\bc 3, \eta_0^\bc 2}(t)
        +
        J_{\bar g_0^\bc 3, \eta_1^\bc 2}(1-t).
      \]
      The case of \( D_{b_2}\beta_3[\eta] \) is obtained symmetrically, with
      arguments similar to~\(b_1\).
    \end{exmp}

    \begin{figure}[tbp]
      \centering
      \begin{subfigure}[t]{.45\linewidth}\centering
        \includegraphics{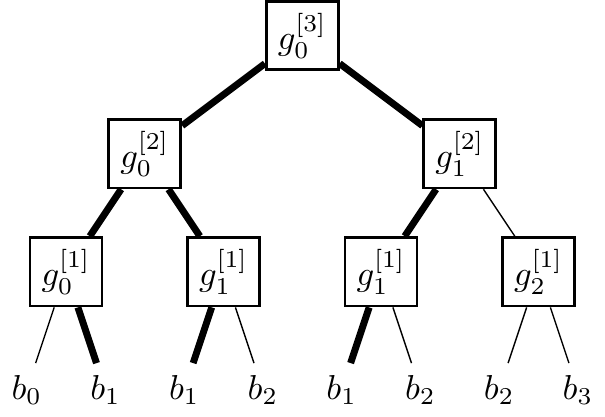}
        \caption{Tree-representation of the
          construction of a cubic Bézier curve. The thick line tracks
          the propagation of \(b_1\) within the tree.}
        \label{fig:constr_tree}
      \end{subfigure}
      \begin{subfigure}[t]{.45\linewidth}\centering
        \includegraphics{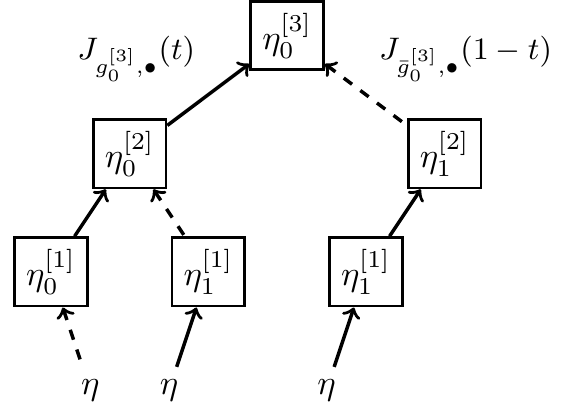}
        \caption{Tree-representation of the
          recursive construction of
          \(\eta_0^\bc 3 \coloneqq D_{b_1}\beta_3[\eta]\).
          The solid lines are Jacobi fields while dashed lines are
          \emph{reversed} Jacobi fields.}
        \label{fig:der_tree}
      \end{subfigure}
      \caption{Construction and derivation tree of a Bézier curve
        \(\beta_3(t;b_0, b_1, b_2, b_3)\).
        The derivative with respect to a variable \(b_i\) is obtained
        by a recursion of Jacobi fields added at each leaf of the tree.
      }
      \label{fig:tree}
    \end{figure}
  %
  % Gradient of the optimal Bezier curve
  \subsection{Joining segments and deriving the gradient}
  \label{subsec:coupledBezier}
    In this subsection we derive the differential of a composite Bézier
    curve~\(\bspline(t)\) consisting of $n$ segments and take the \(C^1\)
    conditions into account. 
    We simplify the notations from
    Section~\ref{subsec:BezierRn} and set the degree fixed to \(K_i=K\) for
    all segments, \eg,~\(K=3\) for a cubic composite Bézier curve. Then the
    control points are~\(b_j^i\), \(j=0,\ldots,K\), \(i=0,\ldots,n-1\). We further
    denote by \(p_i=b_{K}^{i-1}=b_0^i\), \(i=1,\ldots,n-1\) the common junction
    point of the segments and \(p_0\) and \(p_n\) the start and end points,
    respecively. For ease of notation we denote by \(b_i^- = b_{K-1}^i\) and
    \(b_i^+=b_1^i\), $i=1,\ldots,n-1$, the two points needed for differentiability (\(C^1\))
    condition investigation, cf.~Fig.~\ref{fig:bezier} for an illustration of
    the framework on $\M = \R$, with $K=3$.

    One possibility to enforce the \(C^1\) condition~\eqref{eq:c1cond} is to
    include it into the composite Bézier curve by replacing \(b_i^+\) with
    \begin{equation}\label{eq:C1constraint}
      b_i^+ = g(2; b_i^-,p_i),\quad i=1,\ldots,n-1.
    \end{equation}
    This way both the directional derivatives of $\bspline(t)$ with respect
    to~\(b_i^+\) and \(p_i\) change due to a further (most inner) chain rule.
    \begin{lem}[Derivative of a composite Bézier curve with \(C^1\) condition]
    \label{lem:derC1}
      Let \(\bspline\) be a composite Bézier curve and \(p_i,b_i^+,b_i^-\)
      introduced as above. Replacing \(b_i^+= g(2;b_i^-,p_i)\) eliminates that
      variable from the composite Bezier curve and keeps the remaining
      differentials unchanged, despite the following which now read
      \begin{equation*}
        D_{b_i^-}\bspline(t)[\eta] =
        \begin{cases}
      % case t \in (i-1, i]
      D_{b_i^-} \beta_K(t-i+1; p_{i-1},b_{i-1}^+,\ldots,\cdot,p_i)[\eta]
        &t\in (i-1,i],\\
      % case t \in (i,i+1]
      D_{b_i^+} \beta_K(t-i; p_i,b_i^+,\ldots,\cdot,p_{i+1})
              \left[
                D_{b_i^-}g(2; \cdot,p_i)[\eta]
            \right]
            &t\in (i,i+1],
        \end{cases}
      \end{equation*}
      and
      \begin{equation*}
        D_{p_i}\bspline(t)[\eta] =
        \begin{cases}
      % case t \in (i-1,i]
      D_{p_i}  \beta_K(t-i+1; p_{i-1},b_{i-1}^+,\ldots,b_i^-,\cdot)[\eta]
        &t\in (i-1,i],\\
      % case t \in (i,i+1]
      D_{p_i} \beta_K(t-i; \cdot,b_i^+,\ldots,b_{i+1}^-,p_{i+1})[\eta]\\
      \quad+
      D_{b_i^+} \beta_K(t-i; p_i,\cdot,\ldots,b_{i+1}^-,p_{i+1})
              \left[
                D_{p_i}g(2; b_i^-,\cdot)[\eta]
            \right]
        &t\in (i,i+1].
        \end{cases}
      \end{equation*}
      In both cases, the first interval includes $i-1=0$, when $i=1$.
    \end{lem}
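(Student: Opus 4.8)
The plan is to verify Lemma~\ref{lem:derC1} by a direct application of the chain rule (Definition~\ref{def:chain}) together with the derivative formula for a single Bézier segment from Theorem~\ref{thm:bezier_derivative}, treating the substitution \eqref{eq:C1constraint} as an innermost composition. First I would fix a parameter value \(t\) and note that \(\bspline(t)\) only depends on the control points of the single segment whose parameter interval contains \(t\); thus for each of the variables \(b_i^-\) and \(p_i\) we only have to look at the (at most two) segments indexed by \(i-1\) and \(i\) in whose defining tuples these points occur. For all other segments the point does not appear, so the derivative is zero there and the corresponding differentials are indeed unchanged by the substitution; this already isolates the claim to the two-case description in the lemma.

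Next I would treat the segment \(t\in(i-1,i]\), i.e.\ \(\beta_K(t-i+1;p_{i-1},b_{i-1}^+,\dots,b_i^-,p_i)\). Here \(b_i^-\) and \(p_i\) are simply (interior, resp.\ last) control points of that segment and the substitution \eqref{eq:C1constraint} does not touch them, since \(b_i^+\) belongs to the \emph{next} segment; hence these differentials are literally the single-segment derivatives from Theorem~\ref{thm:bezier_derivative}, which gives the first line in each case. For the segment \(t\in(i,i+1]\), i.e.\ \(\beta_K(t-i;p_i,b_i^+,\dots,b_{i+1}^-,p_{i+1})\), the point \(b_i^+\) has been replaced by \(g(2;b_i^-,p_i)\). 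I would then write this segment as a composition \(F = \beta_K(\,\cdot\,) \circ \Phi\) where \(\Phi\) maps the free variable (\(b_i^-\) resp.\ \(p_i\)) into the \(b_i^+\)-slot via \(g(2;\cdot,p_i)\) resp.\ \(g(2;b_i^-,\cdot)\), while \(p_i\) additionally still occupies the first slot of the segment. Applying the chain rule \eqref{eq:chainrule} gives, for \(b_i^-\), exactly \(D_{b_i^+}\beta_K(\cdots)[D_{b_i^-}g(2;\cdot,p_i)[\eta]]\); and for \(p_i\) the sum of the direct contribution \(D_{p_i}\beta_K(\cdots)[\eta]\) (from the first slot) and the indirect contribution \(D_{b_i^+}\beta_K(\cdots)[D_{p_i}g(2;b_i^-,\cdot)[\eta]]\) (through \(\Phi\)), which is precisely the second line of each case. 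The edge case \(i=1\), where the ``previous'' segment is the first one on \([0,1]\) and the first interval reads \((0,1]\) with left end included, needs only the remark that \(p_{i-1}=p_0\) is then the genuine start point, which I would note in one sentence.

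The main obstacle, and the only place requiring care, is the bookkeeping of \emph{which} slot each of \(b_i^-\) and \(p_i\) occupies in each of the two adjacent segments after the substitution, and correspondingly which branch of Theorem~\ref{thm:bezier_derivative} (the \(j=i\), \(i<j<i+k\), or \(j=i+k\) case, read at the top level \(k=K\)) is being invoked; in particular one must recognise that \(p_i\) appears \emph{twice} in the second segment (once as its own first control point, once hidden inside \(b_i^+\)), so that the chain rule produces a genuine sum, whereas \(b_i^-\) appears only through \(b_i^+\) in that segment. Once this combinatorial accounting is laid out, the statement follows with no further computation — the inner derivatives \(D_{b_i^-}g(2;\cdot,p_i)[\eta]\) and \(D_{p_i}g(2;b_i^-,\cdot)[\eta]\) are left in the unevaluated Jacobi-field form of Section~\ref{subsec:Jacobi}, exactly as displayed in the lemma, and need not be expanded here.
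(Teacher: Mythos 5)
Your argument is correct and follows essentially the same route as the paper's (much terser) proof: the first cases are just the single-segment derivatives from Theorem~\ref{thm:bezier_derivative}, and for the second cases the substitution \(b_i^+=g(2;b_i^-,p_i)\) is handled by the chain rule, producing one term for \(b_i^-\) and an additional term for \(p_i\) since \(p_i\) also occupies the first slot of that segment. Your extra bookkeeping about which segments and slots are affected is a faithful elaboration of what the paper leaves implicit.
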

    \begin{proof}
      Both first cases are from the derivation of a bezier curve as before,
      for both second cases replacing \(b_i^+ = g(2; b_i^-,p_i)\) yields one
      (for \(p_i\) additional) term.
    \end{proof}
  %
  % Grad of dMSA
    We now derive the gradient of the objective function~\eqref{eq:discr_obj}.
    We introduce the abbreviation \(\bspline_i = \bspline(t_i) \in \M\),
    and \(\d^2_{2,i} = \d^2_2 ( \bspline_{i-1},\bspline_i,\bspline_{i+1} )\).
    \begin{thm}
      \label{lemma:optimal-bezier-curve}
      Let $\M$ be a $m$-dimensional manifold, \( x \) be one of the control
      points of a composite Bézier curve~\(\bspline\), 
      and \(\{ \xi_1,\dots,\xi_m \}\) be a corresponding orthonormal basis (ONB) of \(T_x \M\).
      The gradient $\nabla_{\M,x}A(\vect{b})$ of the discretized mean squared
      acceleration $A(\vect{b})$ of~\(\bspline\), w.r.t. \(x\), discretized
      at $N+1$ equispaced times \(t_0,\ldots,t_N\) is given by
      \[
      \nabla_{\M,x}A(\vect{b}) =
        \sum_{\ell=1}^m
        \sum_{i=1}^{N-1}
          \sum_{j = i-1}^{i+1}
          \langle
            \nabla_\M \d^2_{2,j}, D_x \bspline_j[\xi_\ell]
          \rangle_{\bspline_j} \xi_\ell.
      \]
    \end{thm}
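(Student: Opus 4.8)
The plan is to combine three ingredients already at our disposal: the definition of the Riemannian gradient (Definition~\ref{def:gradient}), the chain rule on manifolds (Definition~\ref{def:chain}), and the expansion of a directional derivative in terms of an ONB that underlies Lemma~\ref{lem:symspace}. Since $A(\vect b) = \sum_{i=1}^{N-1}\Delta_t^{-3}\d^2_{2,i}$ is a finite sum, by linearity of the gradient it suffices to differentiate a single summand $\d^2_{2,i} = \d^2_2(\bspline_{i-1},\bspline_i,\bspline_{i+1})$ with respect to a fixed control point $x$ and then sum; for notational transparency I would actually carry the $\Delta_t^{-3}$ implicitly (the statement as written absorbs it, so strictly I would note it is kept inside $A$ and reappears unchanged). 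First I would fix $x$ and an ONB $\{\xi_1,\dots,\xi_m\}$ of $T_x\M$, pick an arbitrary direction $\eta = \sum_{\ell=1}^m\eta_\ell\xi_\ell \in T_x\M$, and compute $D_x A(\vect b)[\eta]$ by the chain rule.

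The key step is the following. Viewing $\d^2_{2,i}$ as the composition of the map $x\mapsto(\bspline_{i-1},\bspline_i,\bspline_{i+1})\in\M^3$ with the function $\d^2_2\colon\M^3\to\R$, the chain rule~\eqref{eq:chainrule} gives
\[
  D_x \d^2_{2,i}[\eta]
  = \sum_{j=i-1}^{i+1} D_{\bspline_j}\d^2_2\bigl[D_x\bspline_j[\eta]\bigr]
  = \sum_{j=i-1}^{i+1} \langle \nabla_\M\d^2_{2,j}, D_x\bspline_j[\eta]\rangle_{\bspline_j},
\]
where in the last equality I invoke Definition~\ref{def:gradient} applied to $\d^2_2$ in each of its three slots, using the shorthand $\nabla_\M\d^2_{2,j}$ for the partial gradient of $\d^2_2(\bspline_{i-1},\bspline_i,\bspline_{i+1})$ with respect to its $j$-th argument evaluated at $\bspline_j$; this partial gradient is the one known in closed form from~\cite{Bacak2016}. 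The inner derivative $D_x\bspline_j[\eta]$ is exactly the object computed recursively via Jacobi fields in Theorem~\ref{thm:bezier_derivative} and Lemma~\ref{lem:derC1}, so it is available as well. Now summing over $i$ and collecting the contributions of each $\bspline_j$ gives $D_xA(\vect b)[\eta] = \sum_{\ell=1}^m\sum_{i=1}^{N-1}\sum_{j=i-1}^{i+1}\langle\nabla_\M\d^2_{2,j},D_x\bspline_j[\eta]\rangle_{\bspline_j}$.

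To pass from the directional derivative to the gradient I would exploit linearity of $\eta\mapsto D_x\bspline_j[\eta]$ (a directional derivative is linear in the direction) and write $D_x\bspline_j[\eta] = \sum_{\ell=1}^m\eta_\ell D_x\bspline_j[\xi_\ell]$. Substituting and using bilinearity of $\langle\cdot,\cdot\rangle_{\bspline_j}$ yields $D_xA(\vect b)[\eta] = \sum_{\ell=1}^m\eta_\ell\bigl(\sum_{i=1}^{N-1}\sum_{j=i-1}^{i+1}\langle\nabla_\M\d^2_{2,j},D_x\bspline_j[\xi_\ell]\rangle_{\bspline_j}\bigr)$. On the other hand, writing the sought gradient as $\nabla_{\M,x}A(\vect b) = \sum_{\ell=1}^m c_\ell\xi_\ell$ and using that $\{\xi_\ell\}$ is orthonormal, Definition~\ref{def:gradient} forces $c_\ell = \langle\nabla_{\M,x}A(\vect b),\xi_\ell\rangle_x = D_xA(\vect b)[\xi_\ell]$, which matches the bracketed coefficient above. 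Reassembling $\nabla_{\M,x}A(\vect b) = \sum_\ell c_\ell\xi_\ell$ gives the claimed formula.

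The argument is essentially bookkeeping, so the only genuine obstacle is making sure the partial-gradient notation $\nabla_\M\d^2_{2,j}$ is unambiguous: $\d^2_2$ is differentiable only up to the non-uniqueness of the midpoint set $\mathcal C_{x,z}$, and its gradient with respect to the middle versus the outer arguments has different closed forms. I would therefore state explicitly that $\nabla_\M\d^2_{2,j}$ denotes the gradient of $c\mapsto\d^2_2(\bspline_{i-1},\bspline_i,\bspline_{i+1})$ in the slot occupied by $\bspline_j$, evaluated at a minimizing geodesic, and refer to~\cite[Sect.~3]{Bacak2016} for its existence and closed form; away from the cut locus this is well defined and the chain rule applies without further care. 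A second minor point is the index range $j\in\{i-1,i+1\}$ touching the endpoints $p_0$ and $p_n$: here $D_x\bspline_j[\xi_\ell]$ is still given by Theorem~\ref{thm:bezier_derivative} (or is zero if $x$ does not influence $\bspline_j$), so no special case arises. This completes the plan.
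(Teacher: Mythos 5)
Your proposal is correct and follows essentially the same route as the paper's proof: expand the gradient in the ONB, apply the chain rule to split each $\d^2_{2,i}$ into the three contributions at $\bspline_{i-1},\bspline_i,\bspline_{i+1}$, invoke the known gradient of $\d_2^2$ from~\cite{Bacak2016} and the Jacobi-field expression for $D_x\bspline_j$, and use linearity in $\eta$ to read off the coefficients. Your side remarks (the implicit $\Delta_t^{-3}$ factor and the well-definedness of $\nabla_\M\d^2_{2,j}$) are points the paper leaves tacit, but they do not change the argument.
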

    \begin{proof}
      As $\nabla_{\M,x}A(\vect{b}) \in T_x \M$, we seek for the
      coefficients~$a_\ell\coloneqq a_\ell(x)$ such that
      \begin{equation}
        \label{eq:grad0}
        \nabla_\M f(x) = \sum_{\ell=1}^m a_\ell \xi_\ell.
      \end{equation}
      Therefore, for any tangential vector
      $\eta \coloneqq \sum_{\ell=1}^m \eta_\ell \xi_\ell \in T_{x} \M$,
      we have,
      \begin{equation}
        \label{eq:grad1}
        \langle \nabla_{\M,x}A(\vect{b}),\eta \rangle_{x}  = \sum_{\ell=1}^m a_\ell \eta_\ell.
      \end{equation}
      By definition of $A$ and Definition~\ref{def:gradient} this yields
      \begin{equation}
        \label{eq:grad2}
        \langle \nabla_{\M,x}A(\vect{b}),\eta \rangle_{x}
          = \sum_{i=1}^{N-1} \langle \nabla_\M \d^2_{2,i}(x),\eta \rangle_x
          = \sum_{i=1}^{N-1} D_x \d^2_{2,i} [\eta].
      \end{equation}
      We compute $D_x\d^2_{2,i} [\eta]$ using the chain rule (Definition~\ref{def:chain}) as
      \begin{equation}
        \label{eq:grad3}
        D_x \d^2_{2,i} [\eta] =
          \sum_{j = i-1}^{i+1}D_{\bspline_j} \d^2_{2,j} \big[ D_x \bspline_j[\eta]\big],
      \end{equation}
      which, by Definition~\ref{def:gradient}, again becomes
      \[
        D_{\bspline_j} \d^2_{2,j} \big[ D_x \bspline_j[\eta]\big]
          = \langle \nabla_\M \d^2_{2,j} , D_x \bspline_j[\eta]
          \rangle_{\bspline_j}.
      \]
      The term on the left of the inner product is given
      in~\cite[Sec.~3]{Bacak2016} and the right term is given in Section~\ref{subsec:derive-bezier}. 
      While the former can be computed using  Jacobi fields and a logarithmic map, 
      the latter is the iteratively coupling of Jacobi fields.
      Furthermore, the differential $D_x\bspline_j[\eta]$ can be written as
      \[
        D_x \bspline_j[\eta] = \sum_{\ell=1}^m \eta_\ell D_x \bspline_j [\xi_\ell] \in T_{\bspline_j}\M.
      \]
      Hence, we obtain
      \[
        D_{\bspline_j} \d^2_{2,j} \big[ D_x \bspline_j[\eta]\big]
        = \sum_{\ell=1}^m \eta_\ell
        \langle
          \nabla_\M \d^2_{2,j},
          D_x \bspline_j[\xi_\ell]
        \rangle_{\bspline_j},
      \]
      and by~\eqref{eq:grad1}, \eqref{eq:grad2} and \eqref{eq:grad3}, it follows
      \[
      \langle \nabla_{\M,x}A(\vect{b}),\eta \rangle_{x} =
            \sum_{\ell=1}^m \eta_\ell
            \sum_{i=1}^{N-1}
            \sum_{j = i-1}^{i+1}
            \langle \nabla_\M \d^2_{2,j},
              D_x \bspline_j[\xi_\ell] \rangle_{\bspline_j},
      \]
      which yields the assertion~\eqref{eq:grad0}.
    \end{proof}
  % Numerical Algorithms
% ----
\section{Application to the fitting problem}
\label{sec:Numerics}
  The fitting problem has been tackled different
  ways this last decades. The approach with B\'ezier curves is more recent, and
  we refer to~\cite{Arnould2015,Absil2016,Gousenbourger2018} for a detailed 
  overview of these methods.
  
  In this section, we present the numerical framework we use in order to
  fit a composite B\'ezier curve $\bspline$ to a set of data points 
  $d_0,\dots,d_n \in \M$ associated with time-parameters $t_0,\dots,t_n$, 
  such that we meet~\eqref{eq:E}. 
  For the sake of simplicity, we limit the study to the case where $t_i = i$,
  $i=0,\dots,n$. Therefore, the fitting problem~\eqref{eq:E} becomes
  \begin{equation}
  \label{eq:EB}
  \min_{\vect{b} \in \Gamma_\bspline} E_\lambda(\vect{b}) \coloneqq
    \int_{t_0}^{t_n} \Bigl\lVert
      \frac{\mathrm D^2 \bspline(t)}{\d t^2}
    \Bigr\rVert^2_{\bspline(t)}
    \d t
    +
    \frac{\lambda}{2} \sum_{i=0}^n \d^2(p_i,d_i),
  \end{equation}
  where $\Gamma_\bspline \in \M^M$ is the set of the $M$ control points of $\bspline$.
  Remark that, compared to~\eqref{eq:E}, the optimization is now done on the
  product manifold $\M^M$. Furthermore, the fitting term now implies a distance
  between $d_i$ and $p_i$, as $\bspline(t_i) = p_i$.
  
  The section is divided in three parts: the product manifold $\M^M$ is
  defined in Section~\ref{subsec:IPC1}, where the contribution of the
  fitting term in the gradient of $E$ is also presented. 
  Then, we propose an efficient algorithm to compute the gradient of
  the discretized MSA, based on so-called \emph{adjoint Jacobi fields}.
  We finally shortly mention the gradient descent algorithm we use as well as
  the involved Armijo rule.
  
  %
  % IP & Approx
  \subsection{Fitting and interpolation}\label{subsec:IPC1}
    Let us clarify the set $\Gamma_\bspline \in \M^M$ from~\eqref{eq:EB}. 
    We will explicitly present the
    vector \(\vect{b}\) and state its size $M$.
    The set $\Gamma_\bspline$ is the set of the $M$ remaining free control
    points to optimize,
    when the $\C^1$ continuity constraints are imposed. We distinguish two cases:
    (i) the fitting case, that corresponds to formulae presented in
    Section~\ref{sec:Gradient}, and (ii) the interpolation case
    ($\lambda \to \infty$) where the constraint $d_i = p_i$ is imposed as well.
    
    For a given composite Bézier curve \(\bspline\colon[0,n]\to\M\)
    consisting of a Bézier curve of degree \(K\) on each
    segment, and given the \(C^1\) conditions~\eqref{eq:c1cond}, the segments
    are determined by the points
    \begin{equation}
    \label{eq:vecBDetail}
    \vect{b} = (p_0,b_0^+,b_2^0,\ldots,b_{K-2}^0,b_1^-,p_1,\ldots,b_{n}^-,p_n)\in\M^{M}.
    \end{equation}
    We investigate the length of M. First, \(b_i^+\) is given by \(p_{i-1}\)
    and \(b_i^-\) via~\eqref{eq:c1cond}.
    Second, for the segments \(i = 2,\ldots,n\) we can further omit \(p_{i-1}\)
    since the value also occurs in the segment~$i$ as last entry.
    Finally, the first segment contains
    the additional value of~\(b_0^+\) that is not fixed by \(C^1\) constraints.
    The first segment is thus composed of $K+1$ control points, while the $n-1$ remaining
    segments are determined by \(K-1\) points. In total we obtain~\(M=n(K-1)+2\)
    control points to optimize.
    
    Minimizing $A(\vect{b})$ alone leads to the trivial solution, for any set
    of control points $\vect{b}=(x,\ldots,x)$, $x\in\M$,
    and this is why the fitting term from~\eqref{eq:EB} is important.

    \paragraph{Fitting ($0 < \lambda < \infty$).}
    If the segment start and end points are obstructed by noise or allowed to
    move, we employ a fitting scheme to balance the importance given to the data points $d_0,\ldots,d_n$.
    Equation~\eqref{eq:EB} reads
    \begin{equation}\label{eq:ApproxFct}
     \argmin_{\vect{b}\in\Gamma_{\bspline}} \tilde A(\vect{b}), \quad
     \tilde A(\vect{b}) \coloneqq
        A(\vect{b}) + \frac{\lambda}{2}\sum_{i=0}^n d^2_{\M}(d_i,p_i),
    \end{equation}
    where \(\lambda \in \R^+\) sets the priority 
    to either the data term (large \(\lambda\)) or the mean squared acceleration
    (small \(\lambda\)) within the minimization.
    The gradient of the data term is given in~\cite{Karcher1977},
    and the gradient of \(\tilde A\) is given by
    \[
      \nabla_{\M^M,x}\tilde A(\vect{b})
      =
      \begin{cases}
        \nabla_{\M^M,x}A(\vect{b}) -\lambda\log_{p_i}d_i
           & \text{ if } x=p_i,\ i=0,\ldots,n,\\
           \nabla_{\M^M,x}A(\vect{b})&\text{ otherwise.}
      \end{cases}
    \]
    
    \paragraph{Interpolation ($\lambda \to \infty$).}
    For interpolation we assume that the start point $p_{i-1}$ and end point~$p_{i}$ 
    of the segments $i=1,\dots,n$ are fixed to given data $d_{i-1}$
    and~$d_{i} \in\mathcal M$, respectively.
    The optimization of the discrete mean squared acceleration $A(\vect{b})$ reads
    \begin{equation}\label{eq:IPFct}
        \argmin_{\vect{b}\in\Gamma_{\bspline}} A(\vect{b}) \quad \text{s.~t. } p_i=d_i,\ i=0,\ldots,n.
    \end{equation}
    Since the $p_i$ are fixed by constraint, they can be omitted from the
    vector~$\vect{b}$. We obtain
    \[
      \vect{b} =
      (b_0^+,b_2^0,\ldots,b_{1}^-,b_2^1,\ldots,b_{n}^-) \in\M^{M'}
    \]
    Since there are $n+1$ additional constraints, the minimization is hence performed on the product 
    manifold $\M^{M'}$, $M'=M-(n+1)=n(K-2)+1$.
  %
  % Adjoint Jacobi
  \subsection{Adjoint Jacobi fields}\label{subsec:AdjJacobi}
    In the Euclidean space $\mathbb R^m$, the adjoint operator $T^*$ of a
    linear bounded operator $T\colon \mathbb R^m\to\mathbb R^q$ is the operator
    fulfilling
    \begin{equation*}
      \langle T(x),y \rangle_{\mathbb R^q} = \langle x,T^*(y) \rangle_{\mathbb R^m}
      ,\quad\text{ for all }x\in\mathbb R^m,\ y\in\mathbb R^q.
    \end{equation*}
  
    The same can be defined for a linear operator $S\colon T_x\M\to T_y\M$,
    $x,y\in\mathcal M$, on a $m$-dimensional Riemannian manifold~\(\M\). The adjoint
    operator~\(S^*\colon T_y\M\to T_x\M\) satisfies
    \begin{equation*}
      \langle S(\eta),\nu \rangle_y = \langle \eta, S^*(\nu)\rangle_x,
      \quad\text{ for all } \eta\in T_x\M,\ \nu\in T_y\M.
    \end{equation*}
    
    We are interested in the case where $S$ is the differential
    operator $D_x$ of a geodesic~$F(x) = g(t;x,y)$ for some fixed $t\in\mathbb R$ and $y\in\M$.
    The differential \(D_xF\colon T_x\M\to T_{F(x)}\M\) can be written as 
    \[
      D_xF[\eta]
      =
      J_{g,\eta}(t)
      =
      \sum_{\ell=1}^{m} \langle \eta,\xi_\ell\rangle_x\alpha_\ell\Xi_\ell(t),
    \]
    where \(\alpha_\ell\) are the coefficients of the Jacobi
    field~\eqref{eq:DxGeoJacobi}, and \(\xi_\ell, \Xi_\ell(t)\) are given as in
    Lemma~\ref{lem:symspace}.
    To derive the \defTerm{adjoint differential} \((D_xF)^*\colon T_{F(x)}\M\to T_x\M\)
    we observe that for any $\nu\in T_{F(x)}\M$ we have
    \begin{equation*}
      \langle D_xF[\eta],\nu\rangle_{F(x)}
      = \sum_{\ell=1}^m \langle \eta,\xi_\ell\rangle_x\alpha_\ell \langle \Xi_\ell(t),\nu\rangle_{F(x)}
      = \Bigl\langle \eta, \sum_{\ell=1}^m \langle\Xi_\ell(t),\nu\rangle_{F(x)}\alpha_\ell\xi_\ell \Bigr\rangle_x.
    \end{equation*}
    Hence the adjoint differential is given by
    \[
      (D_xF)^*[\nu] = \sum_{\ell=1}^m
        \langle \nu,\Xi_\ell(t)\rangle_{F(x)}\alpha_\ell\xi_\ell,
      \qquad \nu\in T_{F(x)}\M.
    \]
    We introduce
    the \emph{adjoint Jacobi field}~\(J^*_{F,\nu}\colon\mathbb R \to
    T_x\mathcal M\), \(\nu\in T_{F(x)}\M\) as
    \[
      J^*_{F,\nu}(t) = \sum_{\ell=1}^m \langle \nu,\Xi_\ell(t)\rangle_{F(x)} J^*_{F,\Xi_\ell(t)}(t)
      = \sum_{\ell=1}^m \langle \nu,\Xi_\ell(t)\rangle_{F(x)}\alpha_\ell\xi_\ell.
    \]
    Note that evaluating the adjoint Jacobi field~$J^*$ involves
    the same transported\\
    frame~\(\{\Xi_1(t),\ldots\Xi_m(t)\}\)
    and the same coefficients $\alpha_\ell$ as the Jacobi field~$J$,
    which means that the evaluation of the adjoint is in no way inferior to
    the Jacobi field itself.

    The adjoint $D^*$ of the differential is useful in particular, when computing the
    gradient~\(\nabla_{\M}(h\circ F)\) of the composition of~\(F\colon\mathcal M \to\mathcal M\)
     with~\(h\colon\M\to\R\).
    Setting $y \coloneqq F(x) \in \M$, we obtain for any \(\eta\in T_x\mathcal M\) that
    \begin{align*}
      \langle \nabla_{\M,x}(h\circ F)(x),\eta\rangle_x
      &= D_x(h\circ F)[\eta] \\
      &= D_{F(x)}h\bigl[ D_x F[\eta] \bigr] \\
      &= \langle \nabla_{\M,y}h(y), D_xF[\eta] \rangle_{y} \\
      &= \Bigl\langle (D_xF)^*\bigl[\nabla_{\M,y} h(y)\bigr], \eta \Bigr\rangle_x.
    \end{align*}
    
    Especially for the evaluation of the gradient of the composite function
    $h\circ F$ we obtain
    \[
      \nabla_{\M,x}(h\circ F)(x) =
      (D_xF)^*\bigl[\nabla_{\M,y} h(y)\bigr] = J^*_{F,\nabla_{\M,y}h(y)}(t).
    \]
    The main advantage of this technique appears in the case of composite
    functions, \ie, of the form \(h\circ F_1\circ F_2\) (the generalization to composition with $K$ functions is straightforward). 
    The gradient
    \(\nabla_{\M,x} (h\circ F_1 \circ F_2)(x) \) now reads,
    \[
      \nabla_{\M,x} (h\circ F_1 \circ F_2)(x) = (D_xF_2)^*\bigl[\nabla_{\M,y_2} h\circ F_1(y_2)\bigr] = J^*_{F_2,\nabla_{\M,y_2} h \circ F_1(y_2))}(t).
    \]
    The recursive computation of $\eta^\bc 3 =\nabla_{\M,x} h (x)$ is then given by the following algorithm
  \begin{eqnarray*}
    \eta^\bc 1 &=& \nabla_{\M,y_1}h(y_1), \\ 
    \eta^\bc 2 &=& J^*_{F_1,\eta^\bc 1}(t), \\ 
    \eta^\bc 3 &=& J^*_{F_2,\eta^\bc 2}(t).
  \end{eqnarray*}
  \begin{exmp}
    For \(h=d_2^2\colon \M^3\to \mathbb R\) we know \(\nabla_{\M^3}h\) by
    Lemma~3.1 and Lemma~3.2 from~\cite{Bacak2016}. Let~\(t_1,t_2,t_3 \in [0,1]\)
    be time points, and $\vect{b}\in\M^M$ be a given (sub)set of the control points
      of a (composite) Bézier curve \(\bspline\).
      We define \(F\colon\M^M\to\M^3$, $\vect{b} \mapsto F(\vect{b}) =
      (\bspline(t_1),\bspline(t_2),\bspline(t_3))\) as the evaluations
      of~$\bspline$ at the three given time points.
      The composition $h\circ F$ hence consists of (in order of evaluation) the
      geodesic evaluations of the De Casteljau algorithm, the mid point function
      for the first and third time point and a distance function.
      
      The recursive evaluation of the gradient starts with the
      gradient of the distance function.
      Then, for the first and third arguments, a mid point Jacobi field
      is applied. The result is plugged into the last geodesic evaluated within the
      De Casteljau “tree” of geodesics. At each geodesic, a tangent vector at
      a point $g(t_j;a,b)$ is the input for two adjoint Jacobi fields, one
      mapping to $T_a\M$, the other to $T_b\M$. This information is available
      throughout the recursion steps anyways. After traversing this tree backwards,
      one obtains the required gradient of $h\circ F$.
    \end{exmp}
    
    Note also that even the differentiability constraint \eqref{eq:c1cond}
    yields only two further (most outer) adjoint Jacobi fields,
    namely~\(J^*_{g(2,b_i^-,p_i),\nabla_{\M,b_i^+}\bspline(t_j)}(2)\)
    and~\(J^*_{\tilde g(2,b_i^-,p_i),\nabla_{\M,b_i^+}\bspline(t_j)}(2)\).
    They correspond to variation of the start point \(b_i^-\) and the end
    point~\(p_i\), respectively as stated in~\eqref{lem:derC1}.
  %
  % GradDesc.
  \subsection{A gradient descent algorithm}\label{subsec:GradDesc}
    To address~\eqref{eq:ApproxFct} or~\eqref{eq:IPFct},
    we use a gradient descent algorithm, as described in~\cite[Ch.~4]{Absil2008}. For completeness
    the algorithm is given in Algorithm~\ref{alg:GradDesc}.
    \begin{algorithm}[t]
      \caption[]{Gradient descent algorithm on a manifold~\(\mathcal N=\M^M\)}
      \label{alg:GradDesc}
      \begin{algorithmic}
        \STATE \textbf{Input. }%
          $F\colon\mathcal N\to\mathbb R$,
          its gradient \(\nabla_{\mathcal N} F\), $x^{(0)}\in\mathcal N$,
          step sizes $s_k>0, k\in\mathbb N$.
        \STATE \textbf{Output:} $\hat x \in \mathcal N$
        \STATE $k \gets 0$
        \REPEAT
        \STATE Perform a gradient descent step
        \(
          x^{(k+1)} \coloneqq \exp_{x^{(k)}}
          \bigl(
            -s_k\nabla_{\mathcal N}F(x^{(k)})
          \bigr)
        \)
        \STATE $k\gets k+1$
        \UNTIL a stopping criterion is reached
        \RETURN $\hat{x} \coloneqq x^{(k)}$
       \end{algorithmic}
     \end{algorithm}
     The step sizes are given by the Armijo line search condition 
     presented in~\cite[Def.~4.2.2]{Absil2008}. Let $\mathcal N$ be a Riemannian manifold, \(x=x^{(k)}\in\mathcal N\) be an iterate of the
     gradient descent, and \(\beta,\sigma\in(0,1), \alpha>0\). Let \(m\) be the
     smallest positive integer such that
     \begin{equation}\label{eq:armijo}
       F(x) - F\bigl(
         \exp_{x}(-\beta^m\alpha\nabla_{\mathcal N}F(x))
       \bigr)
       \geq
       \sigma\beta^m\alpha\lVert \nabla_{\mathcal N}F(x)\rVert_x.
     \end{equation}
     We set the step size to \(s_k \coloneqq \beta^m\alpha\)
     in Algorithm~\ref{alg:GradDesc}.

     As a stopping criterion we use a maximal number \(k_{\max}\) of iterations or
     a minimal change per iteration \(d_{\mathcal
     N}(x^{k},x^{k+1})<\varepsilon\). In practice, this last criterion is matched first.

     The gradient descent algorithm converges to a critical
     point if the function $F$ is convex~\cite[Sec.~4.3 and 4.4]{Absil2008}.
     The mid-points
     model~\eqref{eq:secondOrderDifference} posesses two advantages: (i) the
     complete discretized MSA~\eqref{eq:discr_obj} consists of (chained)
     evaluations of geodesics and a distance function, and (ii) it reduces
     to the classical second order differences on the Euclidean space. However, this model
     is not convex on general manifolds.
     An example is given in the arXiv preprint (version 3)
     of~\cite{Bacak2016}\footnote{see \href{https://arxiv.org/abs/1506.02409v3}%
     {arxiv.org/abs/1506.02409v3}}, Remark 4.6. Another possibility (also
     reducing to classical second order differences in Euclidean space) is the
     so-called Log model (see, \eg,~\cite{Boumal2013})
     \begin{align*}
       d_{2,\text{Log}}[x,y,z]  = \lVert \log_yx+\log_yz\rVert_y.
     \end{align*}
     The (merely technical) disadvantage of the Log model is
     that the computation
     of the gradient involves further Jacobi fields than the one presented above,
     namely to compute the differentials of the logarithmic map both with respect
     to its argument~$D_x\log_yx$ as well as its base point~$D_y\log_yx$.
     Still, these can be given in closed form for symmetric Riemannian
     manifolds~\cite[Th.~7.2]{Bermann2017a}\cite[Lem.2.3]{Persch2018}.
     To the best of our knowledge, the joint convexity of the Log model
     in $x$, $y$ and $z$ is still an open question.
     
% Examples
% ---
\section{Examples}\label{sec:Examples}
  In this section, we provide several examples of our algorithm
  applied to the fitting problem~\eqref{eq:EB}.
  
  We validate it first on the Euclidean space and verify that it retrieves
  the natural cubic smoothing spline. We then present examples on the
  sphere $\mathbb{S}^2$ and the special orthogonal group $\mathrm{SO}(3)$.
  We compare our results with the fast algorithm of Arnould~\etal~\cite{Arnould2015},
  generalized to fitting, and the so-called blended
  cubic splines from~\cite{Gousenbourger2018}.
  The control points of the former are obtained by generalizing the optimal
  Euclidean conditions of~\eqref{eq:EB} (in $\R^m$, this is a 
  linear system of equations) to the manifold setting; 
  the curve is afterwards reconstructed by a classical De Casteljau algorithm.
  In the latter, the curve is obtained as a blending of solutions computed
  on carefully chosen tangent spaces, \ie, Euclidean spaces.
  
  The following examples were implemented in MVIRT~\cite{Bergmann2017}\footnote{
  open source, available at \url{http://ronnybergmann.net/mvirt/}}, and the
  comparison implementations from~\cite{Arnould2015} and~\cite{Gousenbourger2018} use
  Manopt~\cite{Manopt}\footnote{ open source, available
  at~\url{http://www.manopt.org}}. Note that both toolboxes use a very similar
  matrix structure and are implemented in Matlab, such that the results can
  directly be compared.

  \subsection{Validation on the Euclidean space}\label{subsec:eucl}
  As a first example, we perform a minimization on the Euclidean space~\(\mathcal
  M=\mathbb R^3\).
  A classical result on $\R^m$ is that the curve $\gamma$ minimizing~\eqref{eq:E} is the natural
  ($\C^2$) cubic spline when $\Gamma$ is a Sobolev space $H^2(t_0,t_n)$.
  We compare our approach to the tangential linear system approach derived 
  in~\cite{Arnould2015} in order to validate our model. 
  We use the following data points
  \begin{align}\label{eq:unitPi}
    d_0 = \begin{bmatrix}
      0\\0\\1
    \end{bmatrix},\quad
    d_1 = \begin{bmatrix}
      0\\-1\\0
    \end{bmatrix},\quad
    d_2 = \begin{bmatrix}
      -1\\0\\0
    \end{bmatrix},\quad
    d_3 = \frac{1}{\sqrt{82}}\begin{bmatrix}
      0\\-1\\-9
  \end{bmatrix},
  \end{align}
  as well as the control points $p_i = d_i$, and
  \begin{equation}\label{eq:unitbi}
  \begin{aligned}
    b_0^+ &= \exp_{p_0} \frac{\pi}{8\sqrt{2}}
    \begin{bmatrix}1\\-1\\0\end{bmatrix},
    &b_1^+ &= \exp_{p_1} -\frac{\pi}{2\sqrt{2}}
    \begin{bmatrix}1\\0\\1\end{bmatrix},
    &b_1^- &= g(2;b_1^+,p_1),\\
    b_2^+ &= \exp_{p_2}\frac{\pi}{2\sqrt{2}}
    \begin{bmatrix}
    0\\1\\-1
    \end{bmatrix},
    &b_3^- &= \exp_{p_3} \frac{\pi}{8}\begin{bmatrix}-1\\0\\0\end{bmatrix},
    &b_2^- &= g(2;b_2^+,p_2),
  \end{aligned}
  \end{equation}
  where the exponential map and geodesic on \(\mathbb R^3\)
  are actually the addition
  and line segments, respectively. Note that, by construction, the 
  initial curve is continuously differentiable
  but (obviously) does not minimize~\eqref{eq:E}. The parameter \(\lambda\) is set to \(50\).
  The MSA of this initial curve \(\tilde A(\vect{b})\) is
  approximately \(18.8828\).

  The data points are given to the algorithm of~\cite{Arnould2015} to reconstruct
  the optimal Bézier curve $\bspline(t)$, which  is the natural $\C^2$ cubic spline.
  The result is shown in
  Fig.~\ref{subfig:EuclideanTest:Curves} together with the first order
  differences along the curve in Fig.~\ref{subfig:EuclideanTest:firstOrder}
  as a dotted curve.

  \begin{figure}[tbp]\centering
  \begin{subfigure}[b]{.48\linewidth}\centering
    ~\hspace{-.8cm}\includegraphics{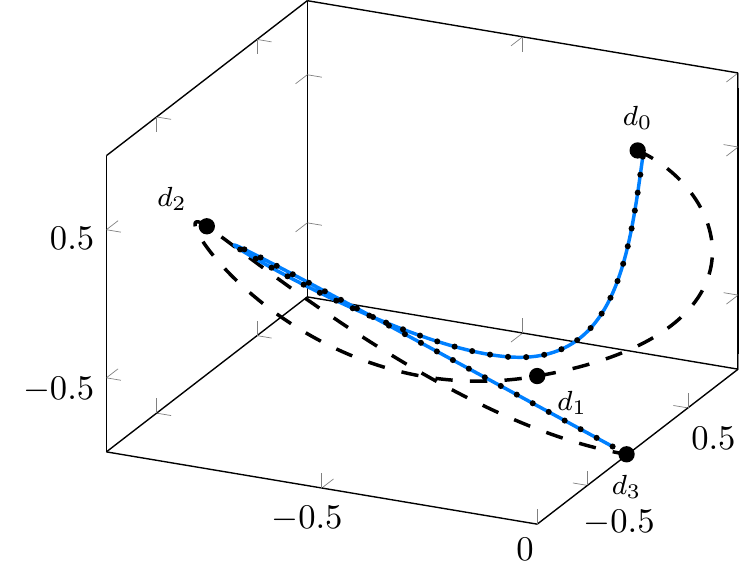}
    \caption{Bézier curves.}
    \label{subfig:EuclideanTest:Curves}
  \end{subfigure}
  \begin{subfigure}[b]{.48\textwidth}\centering
    ~\hspace{-1.3cm}\includegraphics{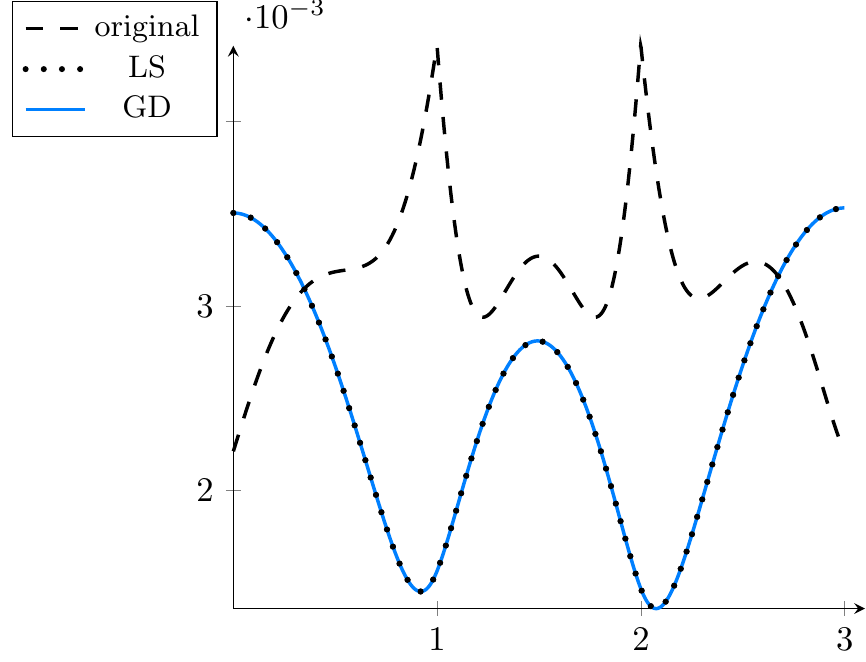}
    \caption{First order differences.}
    \label{subfig:EuclideanTest:firstOrder}
  \end{subfigure}
  \caption{%
    The initial interpolating Bézier curve in \(\mathbb R^3\) (dashed,
    \subref{subfig:EuclideanTest:Curves}) with an MSA of~\(18.8828\) is 
    optimized by the linear system method (LS) from~\cite{Arnould2015}
    (dotted) and by the proposed gradient descent (GD, solid blue).
    As expected, both curve coincide, with
    an MSA of~\(4.981218\). The correspondance is further illustrated with
    their first order differences
    in~(\subref{subfig:EuclideanTest:firstOrder}).
  }
  \end{figure} 

  The set of control points \((p_0,b_0^+,\dots,b_3^-,p_3)\) is
  optimized with our proposed method. We discretize the second
  order difference using \(N=1600\) points.
  The resulting curve and the first order difference plots
  are also obtained using these sampling values and a first order forward
  difference.
  The gradient descent algorithm from Algorithm~\ref{alg:GradDesc}
  employs the Armijo rule~\eqref{eq:armijo} setting~\(\beta=\frac{1}{2}\),~\(\sigma=10^{-4}\),
  and~\(\alpha=1\).
  The stopping criteria are \(\sum_{i=1}^{1600} d_{\mathcal M}(x^{(k)}_i,x^{(k)}_i) < \epsilon = 10^{-15}\) or
  \(\lVert \nabla_{\mathcal M^n}\tilde A\Vert_2 < 10^{-9}\), and the algorithm 
  stops when one of the two is met. For this example, the
  first criterion stopped the algorithm, while the norm of the gradient was of
  magnitude $10^{-6}$.

  Both methods improve the initial functional value of \(\tilde A(\vect{b}) \approx
  18.8828\) to a value of~\(\tilde A(\vect{b}_{\text{min}}) \approx 4.981218\).
  Both the linear system approach and the gradient descent perform equally.
  The difference of objective value is
  \(2.4524\times10^{-11}\) smaller for the gradient descent, and the maximal
  distance of any sampling point of the resulting curves is of size
  \(4.3\times10^{-7}\). Hence, in the Euclidean space, the proposed gradient descent
  yields the natural cubic spline, as  one would expect.

  \subsection{Examples on the sphere~$\mathbb{S}^2$}
  \paragraph{Validation on a geodesic.}
  As a second example with a known minimizer,
  we consider the manifold~\(\mathcal
  M=\mathbb S^2\), \ie, the two-dimensional unit sphere embedded
  in~\(\mathbb R^3\), where geodesics are great arcs. We use the data points
  \[
    d_0 = \begin{bmatrix}
      0 \\ 0 \\ 1
    \end{bmatrix},\quad
    d_1 = \begin{bmatrix}
      0 \\ 1 \\ 0
    \end{bmatrix},\quad
    d_2 = \begin{bmatrix}
      0 \\ 0 \\ -1
    \end{bmatrix}
  \]
  aligned on the geodesic connecting the
  north pole \(p_0\) and the south pole~\(p_2\), and running through a point~\(p_1\)
  on the equator. We define the control points of the cubic Bézier curve as follows:
  \begin{align*}
    x_0 = \frac{1}{\sqrt{6}}\begin{bmatrix}1 \\ 1 \\ 2\end{bmatrix},
    && 
    b_0^+ = \Exp{p_0}[3\Log{p_0}[x_0]] ,
    &&
    b_1^- = \frac{1}{\sqrt{6}}\begin{bmatrix}1 \\ 2 \\ 1\end{bmatrix},
    \\
    x_2 = \frac{1}{\sqrt{6}}\begin{bmatrix}-1 \\ 1 \\ -2\end{bmatrix}, 
    && 
    b_1^+ = \frac{1}{\sqrt{6}}\begin{bmatrix}-1 \\ 2 \\ -1\end{bmatrix},
    &&
    b_2^- = \Exp{p_2}[\frac{1}{3}\Log{p_0}[x_2]],
  \end{align*}
  where $x_0$ and $x_2$ are temporary points and $p_i = d_i$. We
  obtain two segments smoothly connected since \(\log_{p_1}b_1^- = -\log_{p_1}b_1^+\).
  The original curve is shown in Fig.~\ref{subfig:S2Seg:Orig}, where especially
  the tangent vectors illustrate the different speed at $p_i$.
  
  The control points are optimized
  with our interpolating model, \ie, we fix the start and end
  points~\(p_0,p_1,p_2\) and minimize~\(A(\vect{b})\).
  
  The curve, as well as the second and first order differences, is sampled
  with~\(N=201\) equispaced points.  
  The parameters of the Armijo rule are again set
  to~\(\beta=\frac{1}{2}\),~\(\sigma=10^{-4}\), and~\(\alpha=1\).
  The stopping criteria are slightly relaxed to \(10^{-7}\) for the distance
  and \(10^{-5}\) for the gradient, because of the sines and cosines involved
  in the exponential map.
  \begin{figure}\centering
    \begin{subfigure}[b]{.31\textwidth}\centering
      \includegraphics[width=.9\textwidth]{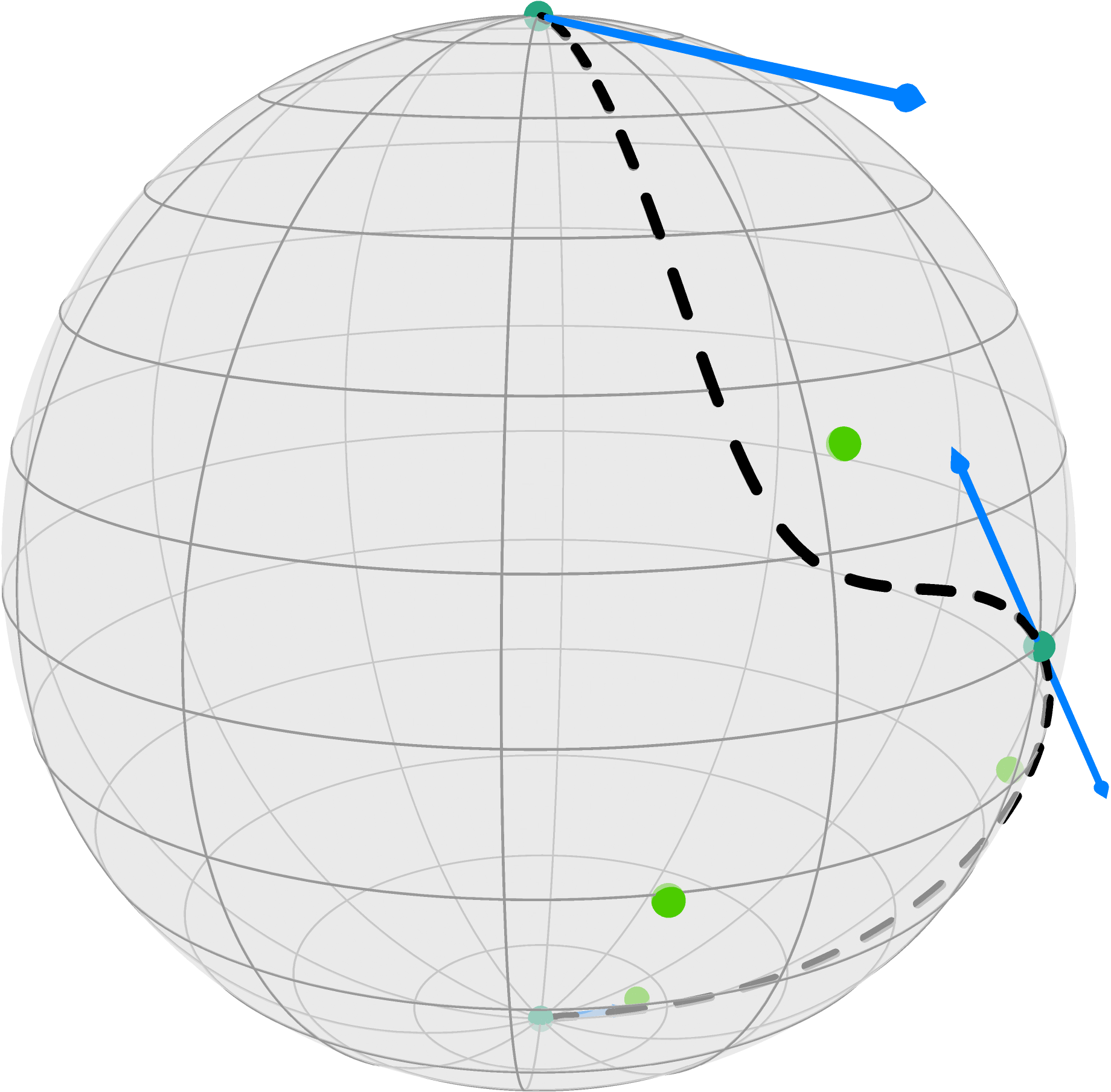}
      \caption{The original curve.}
      \label{subfig:S2Seg:Orig}
    \end{subfigure}
    \begin{subfigure}[b]{.31\textwidth}\centering
      \includegraphics[width=.9\textwidth]{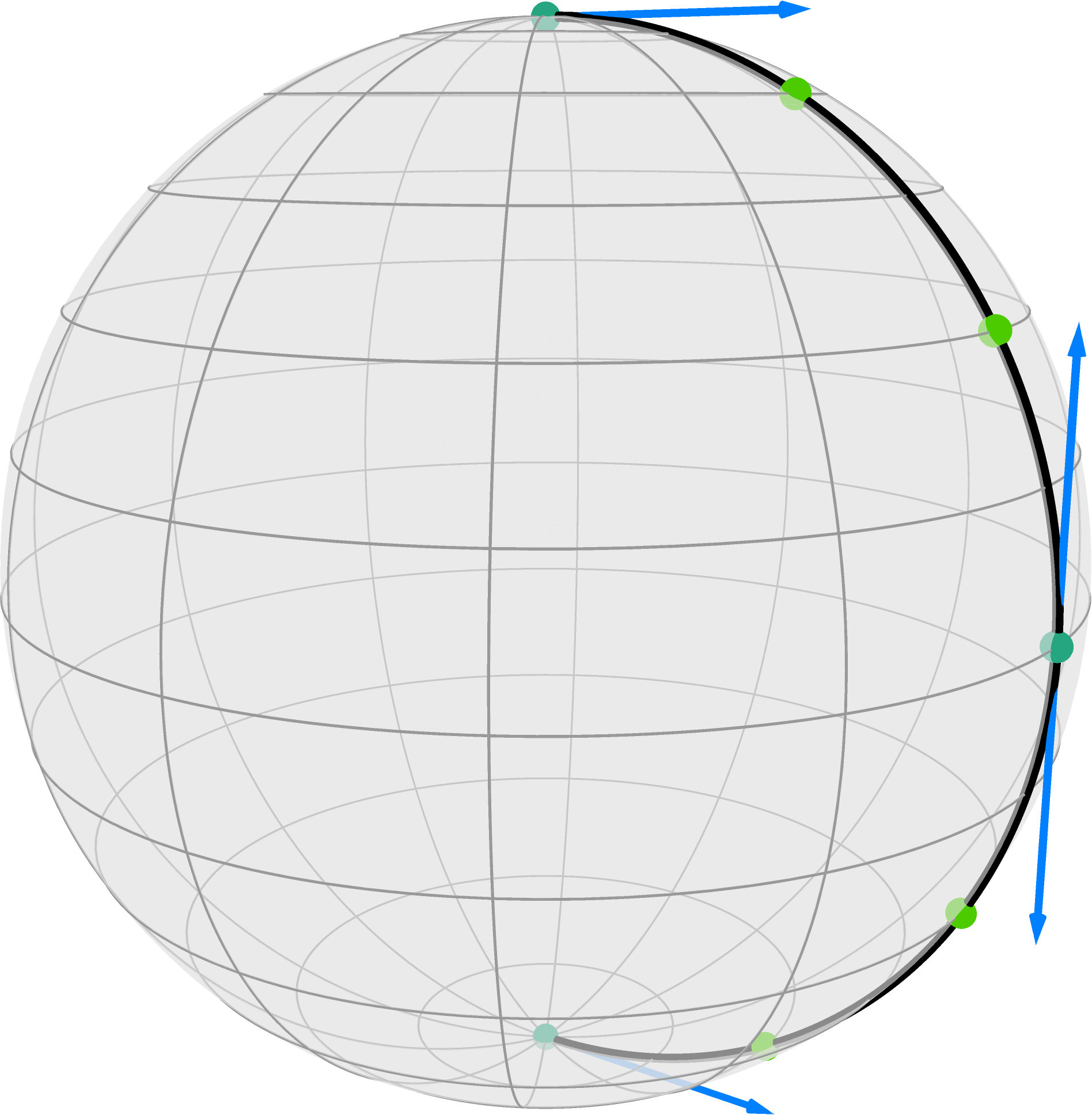}
      \caption{The minimized curve.}
      \label{subfig:S2Seg:Result}
    \end{subfigure}
    \begin{subfigure}[b]{.36\textwidth}\centering
      \includegraphics{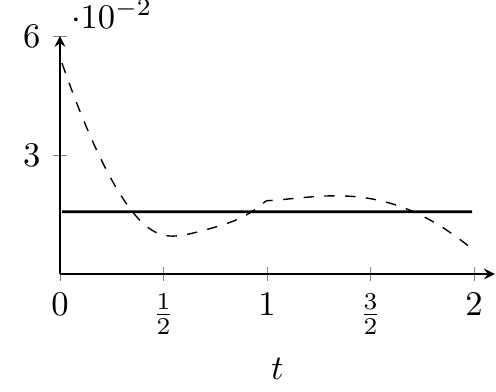}
      \caption{Absolute first order difference.}
      \label{fig:S22SegPlot}
    \end{subfigure}
    \caption{The initial curve (dashed, \subref{subfig:S2Seg:Orig}) results in a
        geodesic~(solid, \subref{subfig:S2Seg:Result}) when minimizing the
        discrete acceleration. This can also be seen on the first order
        differences~(\subref{fig:S22SegPlot}).}
    \label{fig:S22Seg}
  \end{figure}

  The result is shown in Fig.~\ref{subfig:S2Seg:Result}. Since the minimizer is
   the geodesic running from \(p_0\) to \(p_2\)
  through \(p_1\), we measure the perfomance first by looking at the
  resulting first order difference, which is constant, as can be seen in Fig.~\ref{fig:S22SegPlot}.
  As a second validation, we observe that the maximal distance of the resulting
  curve to the geodesic is of~\(2.2357\times10^{-6}\). These evaluations again
  validate the quality of the gradient descent.

  \paragraph{Effect of the data term.}
  As a third example we investigate the effect of \(\lambda\) in the
  fitting model. We consider the data points
  \[
    d_0 = \begin{bmatrix}
      0&0&1
    \end{bmatrix}^\tT, \qquad
    d_1 = \begin{bmatrix}
      0&-1&0
    \end{bmatrix}^\tT, \qquad
    d_2 = \begin{bmatrix}
      -1&0&0
    \end{bmatrix}^\tT, \qquad
    d_3 = \begin{bmatrix}
      0&0&-1
    \end{bmatrix}^\tT,
  \]
  as well as the control points $p_i = d_i$, and
  \begin{align*}
    b_0^+ &= \Exp{p_0}[\frac{\pi}{8\sqrt{2}}
        \begin{bmatrix} 1  \\ -1  \\ 0 \end{bmatrix}], &
    b_1^+ &= \Exp{p_1}[-\frac{\pi}{4\sqrt{2}}
        \begin{bmatrix} -1  \\ 0  \\ 1 \end{bmatrix}], \\
    b_2^+ &= \Exp{p_2}[\frac{\pi}{4\sqrt{2}}
        \begin{bmatrix} 0  \\ 1  \\ -1 \end{bmatrix}], &
    b_3^- &= \Exp{p_3}[-\frac{\pi}{8\sqrt{2}}
        \begin{bmatrix} -1  \\ 1  \\ 0 \end{bmatrix}].
  \end{align*}
  The remaining control points $b_1^-$ and $b_2^-$ are given by
  the~$\C^1$ conditions~\eqref{eq:c1cond}.
  The corresponding cuve \(\mathbf B(t)\), \(t\in [0,3]\), is shown in
  Fig.~\ref{fig:S2-3Seg} in dashed black. When computing a minimal MSA curve that
  interpolates~\(\mathbf B(t)\) at the points \(p_i\), \(i=0,\ldots,3\), the
  acceleration is not that much reduced, see the blue curve in Fig.~\ref{subfig:S2-3Seg:IP}.

  For fitting, we consider different values of \(\lambda\)
  and the same parameters as for the last example. 
  The optimized curve fits the data points closer and closer
  as $\lambda$ grows, and the limit
  \(\lambda\to\infty\) yields the interpolation case. On the other hand smaller
  values of $\lambda$ yield less fitting, but also a smaller value of the
  mean squared acceleration. In the limit case, \ie, $\lambda=0$, the
  curve (more precisely the control points of the Bézier curve) just follows the
  gradient flow to a geodesic.
    
  The results are collected in Fig.~\ref{fig:S2-3Seg}.
  In Fig.~\ref{subfig:S2-3Seg:IP}, the original curve (dashed) is shown together
  with the solution of the interpolating model (solid blue) and the gradient
  flow result, \ie, the solution of the fitting  model (solid black)
  with~\(\lambda=0\). Fig.~\ref{subfig:S2-3Seg:Approx} illustrates the effect
  of~\(\lambda\) even further with a continuous variation of \(\lambda\)
  from a large value of~$\lambda=10$ to a small value of
  $\lambda=0.01$. The image is generated by sampling this range with $1000$
  equidistant values colored in the colormap \texttt{viridis}.
  Furthermore, the control points are also shown in the same color. 
  
  Several corresponding functional values, see Tab.~\ref{tab:S2IPApprValues}, further
  illustrate that with smaller values of \(\lambda\) the discretized MSA also
  reduces more and more  to a geodesic. For \(\lambda=0\) there is no coupling
  to the data points and hence the algorithm does not restrict the position of
  the geodesic. In other words, any choice for the control points that yields a
  geodesic, is a solution. Note that the gradient flow still chooses a reasonably near
  geodesic to the initial data (Fig.~\ref{subfig:S2-3Seg:IP}, solid black).
  
  \begin{figure}[tbp]
    \begin{subfigure}[t]{.445\textwidth}
      \includegraphics[width=6.5cm]{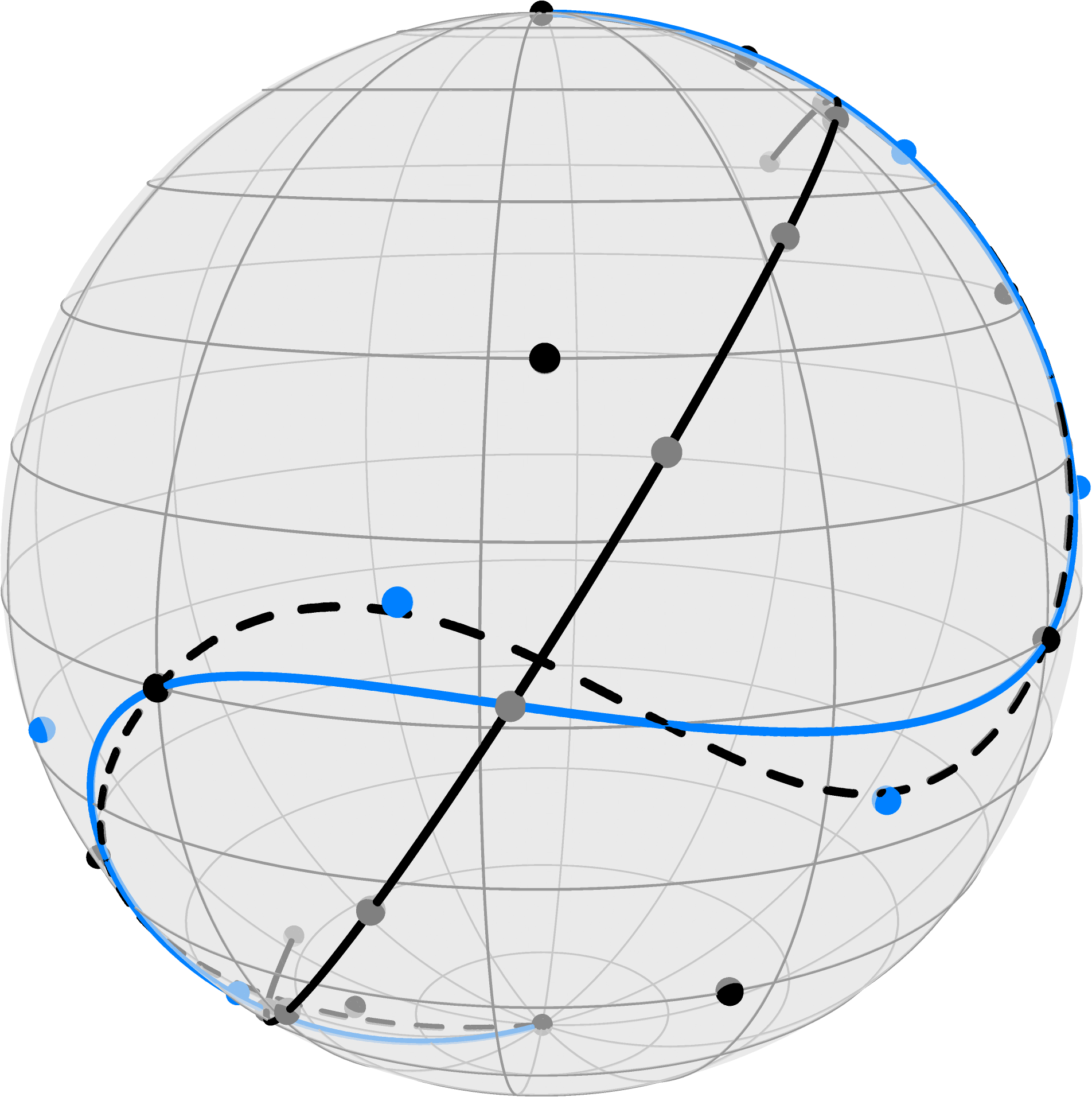}
      \caption{Initial (dashed), interpolation (solid blue) and fitting with \(\lambda=0\) (solid black).}
      \label{subfig:S2-3Seg:IP}
    \end{subfigure}
    \begin{subfigure}[t]{.535\textwidth}
      \includegraphics{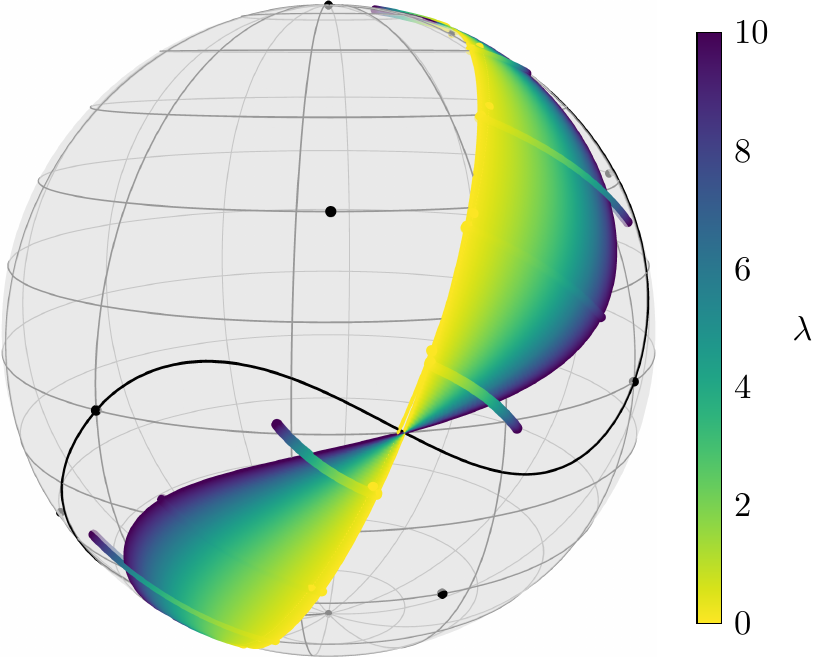}
      \caption{Reducing the data term from $\lambda=10$ (violet)
      down to $\lambda=0.01$ (yellow) in $1000$ equidistant steps.}
      \label{subfig:S2-3Seg:Approx}
    \end{subfigure}
    \caption{The composite B\'ezier curves are composed of three segments.
      The initial curve (dashed, \subref{subfig:S2-3Seg:IP}) is optimized with
      the interpolating model (solid blue, \subref{subfig:S2-3Seg:IP}) as well as with the fitting
      model for a continuum of values of
      $\lambda$, from $\lambda=10$ (violet, \subref{subfig:S2-3Seg:Approx})
      to $\lambda=0.01$ (yellow, \subref{subfig:S2-3Seg:Approx}).
      The limit case where $\lambda = 0$ yields an unconstrained geodesic (solid black, \subref{subfig:S2-3Seg:Approx}).
      }
    \label{fig:S2-3Seg}
  \end{figure}
  \begin{table}[tbp]\centering
    \setlength{\tabcolsep}{12pt}
    \begin{tabular}{S[table-format=2.1] S[table-format=2.4e+2]}
      \toprule
      {$\lambda$}  & {$\tilde A(\vect{b})$} \\
      \midrule
      {orig}     & 10.6122 \\
      {$\infty$}  & 4.1339 \\
      10      & 1.6592 \\
      1      & 0.0733 \\
      0.1     & 0.0010 \\
      0.01     & 1.0814e-5 \\
      0.001     & 1.6240e-7 \\
      0       & 3.5988e-9 \\
      \bottomrule
    \end{tabular}
    \caption{Functional values for the three-segment composite
      Bézier curve on the sphere and different values of $\lambda$.
      Note that the case \(\lambda=\infty\) corresponds to interpolation.}
    \label{tab:S2IPApprValues}
  \end{table}

  \paragraph{Comparison with the tangential solution.}
  In the Euclidean space,
  the method introduced in~\cite{Arnould2015,Gousenbourger2018} and the
  gradient descent proposed here yield the same curve, \ie, the natural cubic spline.
  On Riemannian manifolds, however, all approaches approximate the optimal solution. Indeed,
  their method provides a solution by working in different tangent spaces,
  and ours minimizes a discretization of the objective functional.

  In this example we take the same data points \(d_i\) as in \eqref{eq:unitPi} now
  interpreted as points on \(\mathcal M=\mathbb S^2\). 
  Note that the control points constructed in~\eqref{eq:unitbi}
  still fit to the sphere since each \(b_i^{\pm}\) is built with a vector
  in~\(T_{p_i}\mathcal M\) and using the corresponding exponential map.
  The result is shown in Fig.~\ref{subfig:S2Compare:Curves}.
  The norm of the first order differences is given in Fig.~\ref{subfig:S2Compare:firstOrder}. 
  The initial  curve (dashed black) has an objective value of~\(10.9103\).
  The tangent version (dotted) reduces this
  value to \(7.3293\). The proposed method (solid blue)
  yields a value of \(2.7908\), regardless of whether the starting curve is
  the initial one, or the one already computed by~\cite{Arnould2015}.
  Note that, when \(p_3 = [0,0,-1]^{\tT}\), the tangent version is even not able to
  compute any result, since the construction is performed in the tangent space
  of \(p_0\) and since~\(\log_{p_0}p_3\) is not defined.

  \begin{figure}\centering
    \begin{subfigure}[b]{.445\textwidth}\centering
      \includegraphics[width=.95\textwidth]{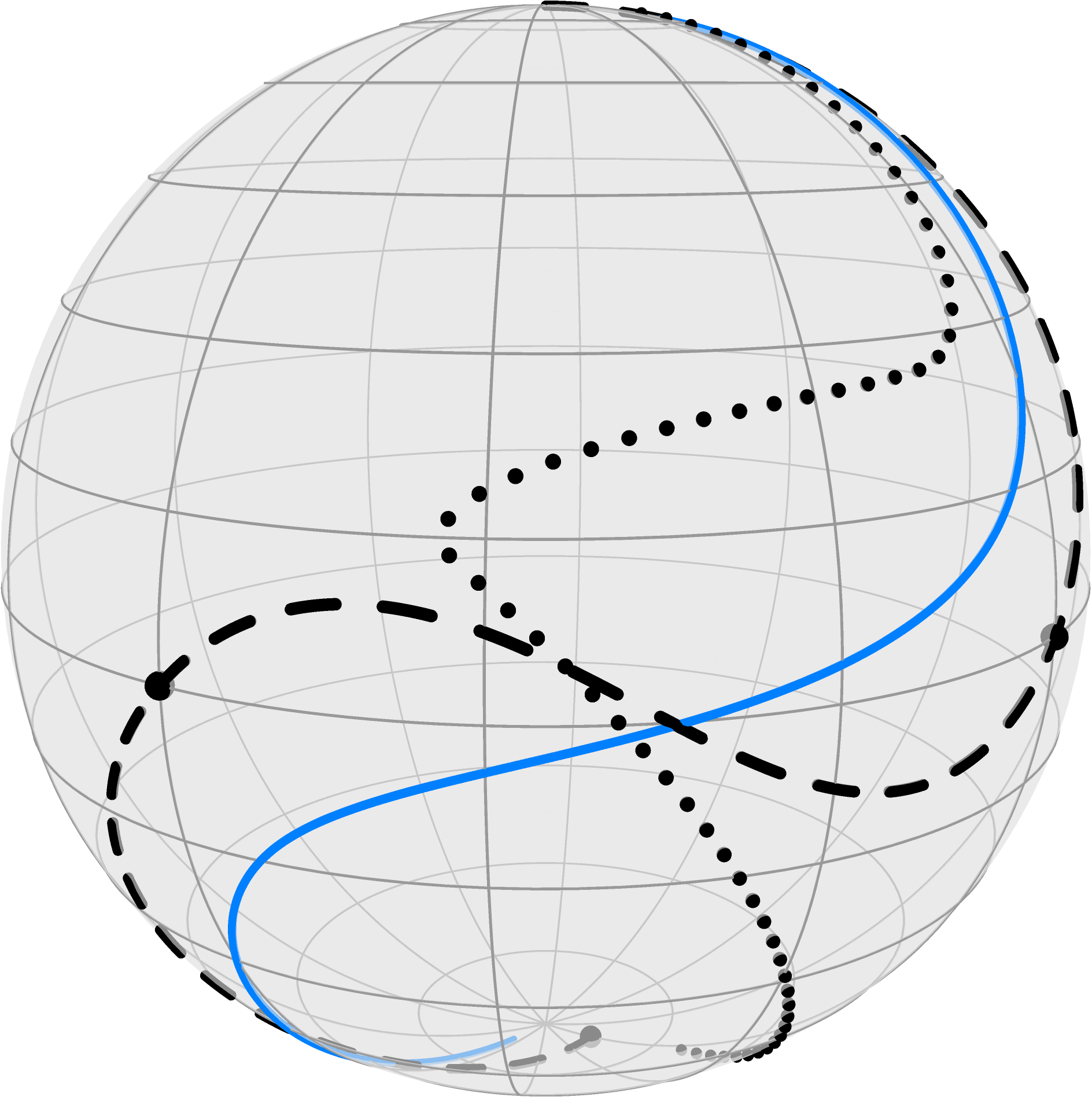}
      \caption{Bézier curves}
      \label{subfig:S2Compare:Curves}
    \end{subfigure}
    \begin{subfigure}[b]{.535\textwidth}\centering
    \hspace*{-1em}
      \includegraphics{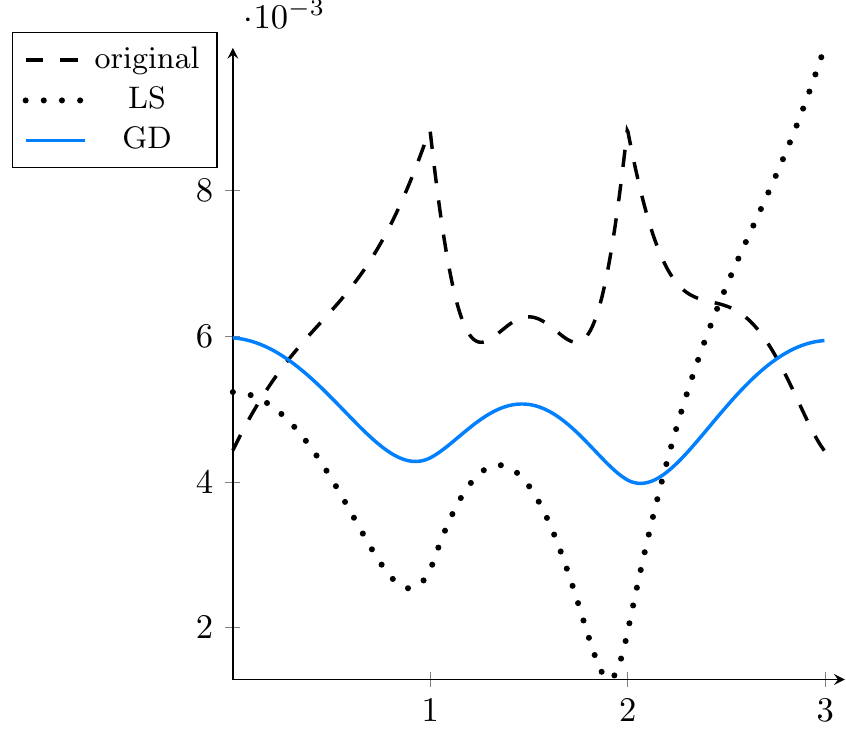}
      \caption{First order differences.}
      \label{subfig:S2Compare:firstOrder}
    \end{subfigure}
    \caption{
      The initial composite B\'ezier curve on $\mathbb{S}^2$
      (dashed, \subref{subfig:S2Compare:Curves}) has an MSA of~\(10.9103\).
      The curve obtained with the
      linear system (LS, dotted) of~\cite{Arnould2015} has
      an MSA of~\(7.3293\). The solution returned by our proposed
      method (GD, solid blue) outperforms them all with an MSA of~\(2.7908\).
      This is further illustrated by the first order
      derivative approximated by first order differences
      in~(\subref{subfig:S2Compare:firstOrder}).}
    \label{fig:S2Compare}
  \end{figure}

\subsection{An example of orientations}
Finally we compare our method with the blended splines introduced in~\cite{Gousenbourger2018}
for orientations, \ie, data given on $\mathrm{SO}(3)$.
Let
\begin{equation*}
  R_{\text{xy}}(\alpha) \!=\! 
  \begin{bmatrix}
    \cos\alpha&\sin\alpha&0\\
    -\sin\alpha&\cos\alpha&0\\
    0&0&1
  \end{bmatrix}\!\!,\ R_{\text{xz}}(\alpha) \!=\!
  \begin{bmatrix}
    \cos\alpha&0&\sin\alpha\\
    0&1&0\\
    -\sin\alpha&0&\cos\alpha
  \end{bmatrix}\!\!,\ R_{\text{yz}}(\alpha) \!=\!
  \begin{bmatrix}
    1&0&0\\
    0&\cos\alpha&\sin\alpha\\
    0&-\sin\alpha&\cos\alpha
  \end{bmatrix}
\end{equation*}
denote the rotation matrices in the $x-y$, $x-z$, and $y-z$ planes,
respectively. 
We introduce the three data points
\begin{align*}
  d_0 &= R_{\text{xy}}\left(\frac{4\pi}{9}\right)R_{yz}\left(-\frac{\pi}{2}\right), \\
  d_1 &= R_{\text{xz}}\left(-\frac{\pi}{8}\right)R_{\text{xy}}\left(\frac{\pi}{18}\right)R_{yz}\left(-\frac{\pi}{2}\right),\\
  d_2 &= R_{\text{xy}}\left(\frac{5\pi}{9}\right)R_{yz}\left(-\frac{\pi}{2}\right).
\end{align*}
These data points are shown in the first line of Fig.~\ref{fig:SO3Approx} in cyan.

We set $\lambda=10$ and discretize~\eqref{eq:EB} with \(N=401\) equispaced points.
This sampling is also
used to generate the first order finite differences. The parameters of the
gradient descent algorithm are set to the same values as for the examples on the sphere.

We perform the blended spline fitting with two 
segments and cubic splines. The resulting control points are shown in the second
row of Fig.~\ref{fig:SO3Approx}, in green. The objective value is
$\tilde A(\vect{b}) = 0.6464$.
We improve this solution with the minimization problem~\eqref{eq:ApproxFct} on
the product manifold $\operatorname{SO}(3)^{6}$.
We obtain the control points shown in the last line of 
Fig.~\ref{fig:SO3Approx} and an objective value of $\tilde A(\hat{\vect{b}}) = 0.2909$ 
for the resulting minimizer $\hat{\vect{b}}$.

We further compare both curves by looking at their absolute first order differences.
In the third line of Fig.~\ref{fig:SO3Approx}, we display $17$ orientations
along the initial curve with its first order difference as height. The line without
samples is the absolute first order differences of the minimizer $\hat{\vect{b}}$,
scaled to the same magnitude. The line is straightened especially for the first
B\'ezier segment.
Nevertheless, the line is still bent a little bit and
hence not a geodesic. This can be seen in the fourth line which represents
$17$ samples of the minimizing curve compared to its control points in the last
line, which are drawn on a straight line.

\begin{figure}[tbp]
  \centering
  \includegraphics[width=\textwidth]{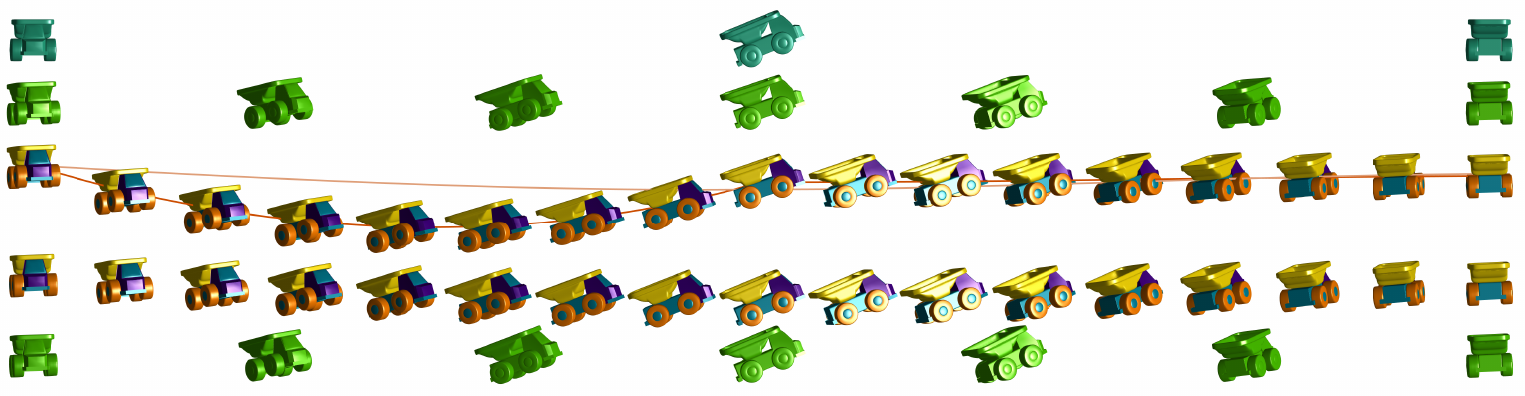}
  \caption{From top row to bottom row: 
  (1) the initial data points (cyan), 
  (2) the control points computed by the blended B\'ezier approach from~\cite{Gousenbourger2018},
  (3) 17 points on the corresponding curve, where the height represents the absolute first
  order difference, and the comparing curve is the first order differences from our
  model, 
  (4) 17 points along the resulting curve from gradient descent, and
  (5) the resulting control points of the curve in (4).}
  \label{fig:SO3Approx}
\end{figure}
\section{Conclusion and future work}\label{sec:Concl}
  In this paper, we introduced a method to solve the curve fitting
  problem to data points on a manifold, using composite B\'ezier curves.
  We approximate the mean squared acceleration of the curve 
  by a suitable second order difference and a trapezoidal rule, 
  and derive the corresponding gradient with respect to its control points.
  The gradient is computed in closed form by exploiting the recursive structure
  of the De Casteljau algorithm. Therefore, we obtain
  a formula that reduces to a concatenation of adjoint Jacobi fields.
  
  The evaluation of Jacobi fields is the only additional requirement compared
  to previous methods, which are solely evaluating exponential and logarithmic maps.
  For these, closed forms are available on symmetric manifolds.
  
  On the Euclidean space our solution reduces to the natural smoothing spline, the unique
  acceleration minimizing polynomial curve.
  The numerical experiments further confirm that the method presented in this paper 
  outperforms the tangent space(s)-based approaches with respect to the functional value.
  
  It is still an open question whether there exists a second order absolute
  finite difference on a manifold, that is jointly convex. Then convergence would follow by standard
  arguments of the gradient descent. For such a model, another interesting
  point for future work is to find out whether only an approximate evaluation of the Jacobi fields
  suffices for convergence. This would mean that, on manifolds where the Jacobi field
  can only be evaluated approximately by solving an ODE,
  the presented approach would still converge.

% Bibliography
\subsection*{Acknowledgements}
  This work was supported by the following fundings: RB gratefully acknowledges
  support by DFG grant BE~5888/2--1; PYG gratefully acknowledges support by
  the Fonds de la Recherche Scientifique -- FNRS and 
  the Fonds Wetenschappelijk Onderzoek -- Vlaanderen under EOS Project no 30468160, 
  and ``Communauté française de Belgique - Actions de Recherche
  Concertées'' (contract ARC 14/19-060).
  Fig.~\ref{subfig:S2-3Seg:IP} is based on an idea of Rozan I.~Rosandi.

\printbibliography
\end{document}